\newtheorem{theorem}{Theorem}
\newtheorem{lemma}{Lemma}
\newtheorem{proposition}{Proposition}
\newtheorem{fact}{Fact}
\theoremstyle{definition}
\newcommand{\norm}[1]{\ensuremath{\left\lVert #1 \right\rVert}}
\newcommand{\ip}[1]{\ensuremath{\left\langle #1 \right\rangle}}
\let\emptyset\varnothing
\newcommand{\set}[1]{\left\{#1\right\}}
\def\L{{\mathbb{L}}}
\def\R{{\mathbb{R}}}
\def\S{{\mathbb{S}}}
\def\cA{{\cal A}}
\def\cC{{\cal C}}
\def\cD{{\cal D}}
\def\cK{{\cal K}}
\def\cM{{\cal M}}
\def\cV{{\cal V}}
\def\cX{{\cal X}}
\def\cZ{{\cal Z}}
\DeclarePairedDelimiterX{\inner}[2]{\langle}{\rangle}{#1, #2}
\DeclareMathOperator{\rank}{rank}
\DeclareMathOperator{\tr}{tr}
\DeclareMathOperator{\bd}{bd}
\DeclareMathOperator{\conv}{conv}
\DeclareMathOperator{\cone}{cone}
\DeclareMathOperator{\Proj}{Proj}
\renewcommand{\b}{{\bm b}}
\renewcommand{\t}{{\bm t}}
\newcommand{\Zb}{\bar{Z}}
\begin{document}

\title{On the strength of Burer's lifted convex relaxation to quadratic programming with ball constraints}

\author{Fatma K{\i}l{\i}n{\c{c}}-Karzan, Shengding Sun}
\date{\today}

\maketitle

\begin{abstract}

We study quadratic programs with $m$ ball constraints, and the strength of a lifted convex relaxation for it recently proposed by Burer (2024). Burer shows this relaxation is exact when $m=2$. For general $m$, Burer (2024) provides numerical evidence that this lifted relaxation is tighter than the Kronecker product based \emph{Reformulation Linearization Technique} (RLT) inequalities introduced by Anstreicher (2017), and conjectures that this must be theoretically true as well. In this note, we provide an affirmative answer to this question and formally prove that this lifted relaxation indeed implies the Kronecker inequalities. 
Our proof is based on a decomposition of non-rank-one extreme rays of the lifted relaxation for each pair of ball constraints. Burer (2024) also numerically observes that for this lifted relaxation, an RLT-based inequality proposed by Zhen et al.\ (2021) is redundant, and conjectures this to be theoretically true as well. 
We also provide a formal proof that Zhen et al.\ (2021) inequalities are redundant for this lifted relaxation. In addition, we establish that Burer's  lifted relaxation is a particular case of the moment-sum-of-squares hierarchy. 
\end{abstract}

\section{Introduction}

In this note, we study a specific class of quadratically constrained quadratic programs (QCQPs) with a general quadratic objective and $m\ge 2$ ball constraints:
\begin{align}\label{eq:prob}
&\min_{x\in\R^n} \set{q(x):~ x\in S}, \quad
\text{where }~ S:=\set{x\in\R^n:~ \|x-c^i\|_2\le r_i,~\forall i\in [m] } .
\end{align}
Here, $[m]:=\{1,\ldots,m\}$, and  $c^i\in\R^n$ and $r_i\in\R$ for all $i\in[m]$, and $\|\cdot\|_2$ stands for the Euclidean norm. We assume $S\ne \emptyset$.

QCQP has a long history in the optimization community, dating back to 1990s. For fixed $m$, QCQPs with domain $S$ given by the intersection of $m$ ball constraints can be solved within desired accuracy in polynomial time \cite{bienstock2016note}. However, no exact convex relaxation is known for all $m$. 

A closely related direction is to study the conical hull of lifted quadratic set of $S$ given by
\begin{equation}\label{eqn:exact}
    \cD:=\cone \{(xx^\top,x):~ x\in S\}\subseteq \S^n_+ \oplus \R^n, 
\end{equation}
where $\cone\{\cX \}$ denotes the conical hull of the set $\cX$ (i.e., set of all nonnegative combinations of the vectors from $\cX$), $\S^n$ is the space of $n\times n$ symmetric matrices and $\S^n_+$ is the cone of positive semidefinite matrices in $\S^n$.

Clearly any (possibly non-convex) quadratic objective over $S$ can be equivalently written as a linear objective over $\cD$. Thus, whenever $\cD$ admits an explicit and concise description, problem~\eqref{eq:prob}  can be efficiently solved. 

The set $\cD$ immediately brings to our attention the  well known Shor semidefinite programming (SDP) relaxation \cite{shor1990dual}, which is computationally efficient but in general only a relaxation.  
Shor SDP relaxation has been studied a lot in the literature and several efforts have been made to tighten it (see \cite{bao2011semidefinite,burer2013second,burer2015gentle} and other references therein). One of the latest such effort is the Kronecker product based \emph{Reformulation Linearization Technique} (RLT) inequalities proposed by \citet{anstreicher2017kronecker}, which are applicable when $S$ is defined by second-order cone (SOC) constraints. When $S$ is defined by SOC constraints,  \citet{zhen2021extension} propose another RLT-based inequality that aims to strengthen the Shor relaxation. 

There has been much work \cite{sturm2003cones,burer2015trust,yang2018quadratic} on studying exact or tight description of $\cD$ for certain classes of $S$. Recently,  when $S$ is given by the intersection of two ball constraints, \citet{kelly2022note} gave an exact disjunctive SDP reformulation. Inspired by this result, for arbitrary number of ball constraints, \citet{burer2024slightly} proposed a lifted  convex relaxation which involves only one more additional variable for lifting, and showed that although it is not an exact relaxation for general $m>2$, its admits a very good numerical performance in terms of both relaxation quality and efficiency of computation. 
In \cite{burer2024slightly}, it was conjectured that this lifted relaxation is provably tighter than the Kronecker RLT inequalities and Zhen et al.'s RLT inequalities are redundant for this lifted relaxation. We answer these two conjectures affirmatively. In particular, in Theorem~\ref{thm:main}, by proving a decomposition theorem (see Theorem~\ref{thm:non_rank_one_extray}) for the  non-rank-one extreme rays of the lifted relaxation proposed in \cite{burer2024slightly} and studying the properties of both rank-one and non-rank-one extreme rays, we show that Kronecker RLT inequalities are redundant for the projection of this lifted relaxation.  In Theorem~\ref{thm:Zhen} we show that Zhen et al.\ RLT inequalities are implied by this lifted relaxation as well. Finally, we close by giving a new interpretation of Burer's lifted relaxation using the techniques from moment-sum-of-squares (moment-SOS) hierarchy. 

\section{Burer's lifted convex relaxation}
\citet{burer2024slightly} proposes a lifted convex relaxation of $\cD$ in the space $\S^{n+2}$, where recall $S$ is defined by $m$ ball constraints 
\[
S:=\set{x\in\R^n:~ \|x-c^i\|_2\le r_i,~\forall i\in [m] } ,
\]
and
\[
\cD:= \cone\{(xx^{\top},x):~ x\in S\}. 
\]
We denote the second-order (SOC) cone (also known as Lorentz cone) in $\R^{n+2}$ by
    \[
    \L^{n+2}
    := \set{x\in\R^{n+2}:~ \norm{(x_1,\ldots,x_{n+1})}_2\le x_{n+2}} ,
    \]    
and denoting $I$ as the $n\times n$ identity matrix, we define
    \[
Q:=\begin{bmatrix}-2I & 0 & 0\\0 & 0 & 1\\0 & 1 & 0\end{bmatrix},\quad
P:=\begin{bmatrix}2I & 0 & 0\\ 0 & 1 & -1\\0 & 1 & 1
\end{bmatrix},\quad
d^i:=\begin{bmatrix}2c^i\\-1\\r_i^2-\|c^i\|_2^2\end{bmatrix},~\forall i\in[m].
\]

Then Burer's lifted relaxation\footnote{Upon conjugation by a permutation matrix} is the intersection of the \emph{convex} set given by
    \begin{equation}\label{eqn:burer}
    \cC_m :=\set{Z\in\S^{n+2}_+:\, \ip{Q,Z} = 0,
    \, PZd^i\in\L^{n+2},\,\forall i\in[m],\, (d^i)^\top Z d^j \ge 0,\,\forall i,j\in [m]}    
    \end{equation}
with $\set{Z\in\S^{n+2}:\, Z_{n+2,n+2}=1}$.   
The way this formulation was derived in \cite{burer2024slightly} is by introducing a new variable $t$ modeling $\|x\|_2^2$ and the homogenization variable $x_0$, and then applying linear-RLT inequalities (i.e., $(d^i)^\top Z d^j \ge 0,\,\forall i,j\in [m]$) and SOC-RLT inequalities (i.e., $PZd^i\in\L^{n+2},\,\forall i\in[m]$) to the set $\widetilde{\cD}:=\cone\{ww^\top:~ w\in \widetilde{S}\}$, where
\[\
\widetilde{S}:=\set{w=(x,t,x_0)\in\R^{n+2}:~ x_0=1, ~\|x\|_2^2=tx_0,~\ip{d^i,w
}\ge 0,\, \forall i\in[m]}.
\]
Clearly, $S$ is the projection of $\widetilde{S}$ onto the first $n$ coordinates, i.e., $S=\Proj_{x}(\widetilde{S})$. Moreover, as 
\[
\conv(S)=\conv\left(\Proj_{x}(\widetilde{S})\right)=\Proj_{x}\left(\conv(\widetilde{S})\right),
\]
(where $\conv(S)$ stands for the convex hull of set $S$) 
we may thus focus on obtaining $\conv(\widetilde{S})$ or a relaxation for it.

\section{Domination of Kronecker RLT inequalities
}

In order to project $\cC_m$ back to the space of $\cD$, we define the following linear maps of $Z\in\S^{n+2}$
\[
\pi_X(Z):=\begin{bmatrix}
    Z_{11} & \ldots & Z_{1n}\\
    \vdots & \ddots & \vdots\\
    Z_{n1} & \ldots & Z_{nn}
\end{bmatrix} \in \S^n,
\quad \pi_x(Z):=\begin{bmatrix}
    Z_{1,n+2}\\ \vdots \\ Z_{n,n+2}
\end{bmatrix}\in\R^n,
\]
and denote $\pi(Z):=(\pi_X(Z),\pi_x(Z))\in \S^n \oplus \R^n$. 

Although $\cD \subsetneq \pi(\cC_m)$ for any $m\ge 2$,  \citet{burer2024slightly} shows that for the case $m=2$ the relaxation (\ref{eqn:burer}) can be slightly modified to become exact, i.e., setting $\ip{Q,Z}\ge 0$ and $(d^1)^\top Zd^2=0$ and keeping everything else the same.
Indeed, when $m=2$ this modification is justified and leads to a valid convex relaxation that is shown to be exact in \cite{burer2024slightly}.
However, this modification does not generalize to $m\ge 3$. 

For $m\ge 3$, \citet{burer2024slightly} numerically compares the strength of (\ref{eqn:burer}) against a lifted SDP formulation strengthened by the Kronecker RLT inequalities proposed in \cite{anstreicher2017kronecker}. The idea for the Kronecker inequalities is as follows: each ball constraint $\|x-c^i\|_2 \le r_i$ is equivalent to
\[
\cA(x-c^i,r_i):=\begin{bmatrix}
    r_i & & & x_1-c^i_1\\
    & \ddots & & \vdots \\
    & & r_i & x_n-c^i_n \\
    x_1-c^i_1 & \ldots & x_n-c^i_n & r_i
\end{bmatrix}\succeq 0.
\]
Recall also that for any two positive semidefinite matrices $A^i\in\S^{n_1}$ and $A^j\in\S^{n_2}$, we always have their Kronecker product $A^i\otimes A^j$ is positive semidefinite as well.
Thus, if a point $x\in\R^n$ satisfies two ball constraints $\|x-c^i\|_2 \le r_i$ and $\|x-c^j\|_2 \le r_j$, then the Kronecker product $\cA(x-c^i,r_i)\otimes \cA(x-c^j,r_j)$ is positive semidefinite as well.
Note that the entries in $\cA(x-c^i,r_i)\otimes \cA(x-c^j,r_j)$ are quadratic functions of $x$ and as such they can be expressed as linear functions of the rank-one matrix $\begin{pmatrix}x\\1\end{pmatrix}\begin{pmatrix}x^\top&1\end{pmatrix}$. In order to obtain the Kronecker inequalities, we simply replace the quadratic terms $x_kx_l$ in $\cA(x-c^i,r_i)\otimes \cA(x-c^j,r_j)$ by $X_{kl}$ and $x_k^2$ by $X_{kk}$. Let $\cK_{ij}$ denote the resulting linear map from $\S^n \oplus \R^n$ to $\S^{(n+1)^2}$ of the matrix variable $X$ and the vector $x$ (see \cite{anstreicher2017kronecker} for formal entry-wise definition of $\cK_{ij}$). Hence, we arrive at the conclusion that any $(X,x)\in\cD$ must satisfy
 \begin{equation} \label{eqn:kron}
    \cK_{ij}(X,x)\succeq 0, \quad \forall i,j\in[m] .
\end{equation}

The numerical study in \cite{burer2024slightly} indicates that the lifted relaxation (\ref{eqn:burer}) is stronger than the Kronecker inequalities (\ref{eqn:kron}), and as a result \cite{burer2024slightly} conjectures that this must be true theoretically as well. In this note we resolve this conjecture and show that this is indeed the case. 

\begin{theorem}\label{thm:main}
For any $m\ge2$ and for all $Z\in \cC_m$, we have     $\cK_{ij}(\pi_X(Z),\pi_x(Z))\succeq 0$ for all $i,j\in[m]$. 
\end{theorem}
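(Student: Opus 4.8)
The plan is, for each fixed pair $(i,j)$, to pass to the two-ball situation and then verify the Kronecker inequality \eqref{eqn:kron} on the extreme rays of $\cC_m$ one at a time. Fix $i,j\in[m]$ and discard from \eqref{eqn:burer} every constraint except $\ip{Q,Z}=0$, the second-order-cone constraints $PZd^i,PZd^j\in\L^{n+2}$, and the linear inequalities $(d^i)^\top Zd^i\ge0$, $(d^i)^\top Zd^j\ge0$, $(d^j)^\top Zd^j\ge0$; this only enlarges the set, and the enlargement is precisely the cone $\cC_2$ built from balls $i$ and $j$, so $\cC_m\subseteq\cC_2$ and it suffices to treat $m=2$ (relabel the balls $1,2$ and the target $\cK_{12}$). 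The exactness of Burer's \emph{modified} two-ball relaxation does not help here: the unmodified $\cC_2$ is already strictly larger than $\cD$, so a direct analysis of extreme rays is genuinely required. Now $\cC_2\subseteq\S^{n+2}_+$ is a closed, pointed convex cone with compact base $\{Z\in\cC_2:\tr Z=1\}$, hence equals the conical hull of its extreme rays; and (reading $\cK_{12}$ homogeneously, so that the constant entries of the Kronecker matrix scale with $Z_{n+2,n+2}$) the set $\{Z:\cK_{12}(\pi_X(Z),\pi_x(Z))\succeq0\}$ is a closed convex cone. So it is enough to check a single extreme ray $Z$ of $\cC_2$.

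Suppose $\rank Z=1$, say $Z=ww^\top$. Then $\ip{Q,Z}=0$ reads $w^\top Qw=0$, while $PZd^k=\big((d^k)^\top w\big)Pw$ together with the sign conditions $(d^k)^\top Zd^l\ge0$ forces — up to replacing $w$ by $-w$ and rescaling, and modulo the trivial case $Z=0$ — that $w=(\xh,\|\xh\|_2^2,1)$ with $(d^k)^\top w=r_k^2-\|\xh-c^k\|_2^2\ge0$ for $k=1,2$, i.e.\ $\xh\in S$. In that case $\pi_X(Z)=\xh\xh^\top$ and $\pi_x(Z)=\xh$, so $\cK_{12}(\pi_X(Z),\pi_x(Z))=\cA(\xh-c^1,r_1)\otimes\cA(\xh-c^2,r_2)\succeq0$, a Kronecker product of two positive semidefinite matrices.

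The crux is the case $\rank Z\ge2$. Here I would invoke Theorem~\ref{thm:non_rank_one_extray}, which decomposes such an extreme ray — relative to the pair $1,2$ — as a sum $Z=\sum_s Z_s$ of positive semidefinite matrices, each $Z_s$ corresponding to a configuration in which the SOC constraint of ball $1$ or of ball $2$ is active, i.e.\ $PZ_s d^k\in\partial\L^{n+2}$. For each summand one then checks $\cK_{12}(\pi_X(Z_s),\pi_x(Z_s))\succeq0$ directly: the boundary identity $\|(PZ_s d^k)_{1:n+1}\|_2=(PZ_s d^k)_{n+2}$ pins down $\pi(Z_s)$ — in particular the component of $\pi_X(Z_s)$ along $\pi_x(Z_s)-c^k$ — tightly enough to exhibit $\cK_{12}(\pi(Z_s))$ as a sum of Kronecker products of positive semidefinite matrices. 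Summing over $s$ and using linearity of $\cK_{12}\circ\pi$ gives $\cK_{12}(\pi(Z))\succeq0$, completing the proof.

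The entire difficulty lies in this last step: first, proving Theorem~\ref{thm:non_rank_one_extray}, which amounts to determining which combinations of active SOC and linear constraints a rank-$\ge2$ extreme ray of $\cC_2$ can support and how its range splits accordingly; and second, converting the SOC-boundary identities into enough quantitative control on each $\pi(Z_s)$ to push the tensor-of-PSD argument through, while tracking the degenerate branches (which of the two SOC constraints is active, whether $Z_{n+2,n+2}$ vanishes, whether $(d^1)^\top Zd^2=0$). A minor, purely bookkeeping, point is the homogeneous reading of $\cK_{12}$ used above; alternatively one may work throughout on the affine slice $\{Z_{n+2,n+2}=1\}$, a compact base of $\cC_2$ (every nonzero $Z\in\cC_2$ has $Z_{n+2,n+2}>0$), on which $\cK_{12}\circ\pi$ is affine.
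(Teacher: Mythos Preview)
Your high-level strategy matches the paper's: reduce to the two-ball cone $\cC_2$, argue on extreme rays, dispatch the rank-one case via Proposition~\ref{prop:rank_one_rays}, and invoke Theorem~\ref{thm:non_rank_one_extray} for the higher-rank case. The reduction and the rank-one case are handled correctly.

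The genuine gap is in your treatment of the non-rank-one case. You propose to check $\cK_{12}(\pi(Z_s))\succeq 0$ \emph{for each summand separately} and then sum. This cannot work. In the decomposition of Theorem~\ref{thm:non_rank_one_extray}, each $Z_s$ is rank one, $Z_s=\alpha_s w^s(w^s)^\top$ with $w^s=(x^s,\|x^s\|_2^2,1)$, so $\cK_{12}(\pi(Z_s))=\alpha_s\,\cA_2(x^s)\otimes\cA_1(x^s)$ is a \emph{single} Kronecker product, not a sum you could further massage. For $s\ge 2$ the point $x^s$ lies on the boundary of ball~1 ($\|x^s-c^1\|_2=r_1$) but is under no constraint with respect to ball~2; in general $\|x^s-c^2\|_2>r_2$, so $\cA_2(x^s)$ has a negative eigenvalue while $\cA_1(x^s)\succeq 0$ still has strictly positive eigenvalues, and the Kronecker product is indefinite. (Only the single term $s=1$ has $x^1$ strictly inside both balls.) So there is no hope of exhibiting each $\cK_{12}(\pi(Z_s))$ as a sum of PSD Kronecker products, and the ``boundary identity'' $PZ_sd^1\in\partial\L^{n+2}$ gives you no leverage on $\cA_2(x^s)$.

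What the paper actually does is an \emph{aggregation} argument, not a term-by-term one. Lemma~\ref{lem:compare_kron} provides, for each test vector $\cV$, a lower bound
\[
\cV^\top\big(\cA_2(x^s)\otimes\cA_1(x^s)\big)\cV \;\ge\; \tfrac{1}{r_1r_2}\,\eta_s\,\|r_1v^{n+1}+b_{n+1}(x^s-c^1)\|_2^2,\qquad \eta_s:=r_2^2-\|x^s-c^2\|_2^2,
\]
valid whenever $\|x^s-c^1\|_2=r_1$ or $x^s$ is strictly inside both balls; crucially these bounds may be \emph{negative} (since $\eta_s$ can be). The proof of Theorem~\ref{thm:better_than_kron} then shows that the \emph{sum} $\sum_s\alpha_s\eta_s\|r_1v^{n+1}+b_{n+1}(x^s-c^1)\|_2^2$ is nonnegative, and this step uses the \emph{global} constraint $P\Zb d^2\in\L^{n+2}$ on the full $\Zb$ (not on the pieces) to certify that an auxiliary $(n{+}1)\times(n{+}1)$ matrix $M$ is PSD. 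In other words, the second SOC constraint enters only at the aggregate level, coupling all the summands together; your plan never uses $P\Zb d^2\in\L^{n+2}$ in this way, and without it the inequality simply fails.

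A smaller point: your description of Theorem~\ref{thm:non_rank_one_extray} (``each $Z_s$ has $PZ_sd^k\in\partial\L^{n+2}$ for some $k$'') blurs the actual asymmetric structure --- one distinguished summand strictly inside both balls, the rest on $\partial$(ball~1), and $P\Zb d^2\in\bd(\L^{n+2})$ only for the total --- which is precisely what makes a summand-by-summand argument impossible and forces the aggregate estimate.
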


Indeed, we prove a slightly stronger form of Theorem~\ref{thm:main}.  
Note that for all $i,j\in[m]$ where $i\neq j$, by defining
\[
\cC_{\{i,j\}} :=\set{Z\in\S^{n+2}_+:\, \ip{Q,Z} = 0,
    \, PZd^i\in\L^{n+2},~ PZd^j\in\L^{n+2},~ (d^i)^\top Z d^j \ge 0},    
\]
we observe that $\cC_m=\bigcap\limits_{i\in[m]}\bigcap\limits_{j<i} \cC_{\{i,j\}}$. In the view of this, in fact, we will prove the following stronger claim:
\begin{theorem}\label{thm:main_refined}
For any $m\ge2$ and $i,j\in[m]$ where $i\neq j$ we have the relation
\begin{align*}
Z\in \cC_{\{i,j\}} \implies \cK_{ij}(\pi_X(Z),\pi_x(Z))\succeq 0.
\end{align*}
\end{theorem}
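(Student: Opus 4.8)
To establish Theorem~\ref{thm:main_refined}, the plan is to reduce the statement to the extreme rays of $\cC_{\{i,j\}}$ and then handle the rank-one and non-rank-one rays separately. First I would record that $Z\mapsto \cK_{ij}(\pi_X(Z),\pi_x(Z))$ is linear in $Z$ (the homogenizing coordinate $Z_{n+2,n+2}$ carries the RLT construction's constant terms, so the map is genuinely linear), and that $\S^{(n+1)^2}_+$ is a closed convex cone; hence the set of $Z$ satisfying the desired conclusion is a closed convex cone, and it suffices to check $\cK_{ij}(\pi_X(Z),\pi_x(Z))\succeq 0$ for $Z$ on an extreme ray of $\cC_{\{i,j\}}$. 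This reduction is valid because $\cC_{\{i,j\}}$ is a closed pointed convex cone — it is the intersection of $\S^{n+2}_+$ with one hyperplane and with preimages of $\L^{n+2}$ and $\R_+$ under linear maps, and it lies inside the pointed cone $\S^{n+2}_+$ — so it has a compact base and equals the conical hull of its extreme rays.

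For a rank-one extreme ray $Z=ww^\top$ with $w=(x,t,x_0)$, I would unpack the defining constraints. The equation $\ip{Q,Z}=w^\top Q w=0$ reads $\|x\|_2^2=tx_0$; writing $Pw=(2x,\,t-x_0,\,t+x_0)$ and noting $\|(2x,t-x_0)\|_2^2=4tx_0+(t-x_0)^2=(t+x_0)^2$, the vector $Pw$ lies on $\bd\L^{n+2}$, so $PZd^i=\ip{d^i,w}\,Pw\in\L^{n+2}$ forces $\ip{d^i,w}\ge0$ (a strictly negative multiple of a nonzero boundary point of $\L^{n+2}$ leaves the cone, and $Pw=0$ forces $w=0$), and likewise $\ip{d^j,w}\ge0$; the inequality $(d^i)^\top Z d^j=\ip{d^i,w}\ip{d^j,w}\ge0$ is then automatic. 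If $x_0>0$, rescaling to $x_0=1$ gives $t=\|x\|_2^2$ and $\ip{d^i,w}=r_i^2-\|x-c^i\|_2^2\ge0$ (and similarly for $j$), so $x$ lies in balls $i$ and $j$; since $(\pi_X(Z),\pi_x(Z))=(xx^\top,x)$ and $Z_{n+2,n+2}=1$, we obtain $\cK_{ij}(\pi_X(Z),\pi_x(Z))=\cA(x-c^i,r_i)\otimes\cA(x-c^j,r_j)\succeq0$, a Kronecker product of two positive semidefinite arrow matrices. If $x_0=0$ then $x=0$, and substituting $w=(0,t,0)$ into $PZd^i\in\L^{n+2}$ forces $t=0$, hence $Z=0$ and the conclusion is trivial.

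For a non-rank-one extreme ray I would invoke the decomposition Theorem~\ref{thm:non_rank_one_extray}, which writes such a $Z$ as a nonnegative combination of rank-one matrices whose generating vectors have a structure prescribed by the pair of balls $\{i,j\}$ (with the SOC-RLT constraints $PZd^i$ and $PZd^j$ on the boundary of $\L^{n+2}$); by linearity of $\cK_{ij}(\pi_X(\cdot),\pi_x(\cdot))$ this reduces the claim to verifying positive semidefiniteness of $\cK_{ij}$ at the projection of each structured piece, which I would do by the same arrow-matrix/Kronecker computation used for rank-one rays, exhibiting each resulting matrix as (a nonnegative combination of) Kronecker products $\cA^{(i)}\otimes\cA^{(j)}$ of positive semidefinite matrices. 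The main obstacle is proving Theorem~\ref{thm:non_rank_one_extray} itself: this requires a careful face analysis of $\S^{n+2}_+$ cut by the equation $\tr(QZ)=0$, the two second-order-cone constraints $PZd^i,PZd^j\in\L^{n+2}$, and the scalar inequality $(d^i)^\top Z d^j\ge0$, where a rank/dimension count shows extremality forces the range of $Z$ to be low-dimensional and one must then pin down its alignment with the two-ball geometry precisely enough to read off the explicit rank-one decomposition. Once that structural result is available, the remaining semidefiniteness check is routine and mirrors the rank-one case; it is establishing the decomposition, rather than verifying $\cK_{ij}\succeq0$ on it, that carries the weight of the proof.
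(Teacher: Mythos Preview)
Your overall strategy---reduce to extreme rays of $\cC_{\{i,j\}}$, then split into rank-one and non-rank-one cases and invoke Theorem~\ref{thm:non_rank_one_extray} for the latter---is exactly the paper's plan, and your rank-one analysis is correct and matches Proposition~\ref{prop:rank_one_rays}. The genuine gap is in the non-rank-one case. You write that after decomposing $\Zb=\sum_{k}\alpha_k w^k(w^k)^\top$ you will verify positive semidefiniteness ``at the projection of each structured piece'' by exhibiting it as a Kronecker product of PSD arrow matrices, mirroring the rank-one case. This does not work: in the decomposition of Theorem~\ref{thm:non_rank_one_extray} only $x^1$ lies in \emph{both} balls, while each $x^k$ with $k\ge 2$ satisfies $\|x^k-c^i\|_2=r_i$ but carries \emph{no} constraint relative to ball $j$, so $\cA(x^k-c^j,r_j)$ need not be PSD and the individual term $\cA(x^k-c^j,r_j)\otimes\cA(x^k-c^i,r_i)$ need not be PSD either. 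The pieces are \emph{not} in $\cC_{\{i,j\}}$, and the semidefiniteness check on the sum is not routine.

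The paper's actual argument (Theorem~\ref{thm:better_than_kron}) is where the work lies. It proves a pointwise lower bound (Lemma~\ref{lem:compare_kron}) of the form
\[
\cV^\top\bigl(\cA_j(x)\otimes\cA_i(x)\bigr)\cV \ \ge\ \tfrac{1}{r_ir_j}\bigl(r_j^2-\|x-c^j\|_2^2\bigr)\,\|r_i v^{n+1}+b_{n+1}(x-c^i)\|_2^2,
\]
valid whenever $\|x-c^i\|_2=r_i$ or $x$ is strictly inside both balls; crucially, the factor $\eta_k:=r_j^2-\|x^k-c^j\|_2^2$ can be negative for $k\ge2$. Summing these bounds over the decomposition, one must then show $\sum_k \alpha_k\eta_k\|r_i v^{n+1}+b_{n+1}(x^k-c^i)\|_2^2\ge 0$, and here the extra information $P\Zb d^j\in\bd(\L^{n+2})$ from Theorem~\ref{thm:non_rank_one_extray} is essential: it yields two scalar relations that are repackaged as the PSDness of an auxiliary $(n+1)\times(n+1)$ matrix $M$, which in turn delivers the needed inequality. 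So it is this aggregate estimate, not the decomposition theorem, that carries the weight; the decomposition itself is obtained in the paper rather quickly from Burer's ROG results (Theorem~\ref{thm:ROG_twoSOC} and Proposition~\ref{prop:ROG_oneSOC}) together with Lemma~\ref{lem:ROG_face}, not from a rank/dimension face analysis.
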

Clearly, Theorem~\ref{thm:main} is a corollary of Theorem~\ref{thm:main_refined}. Thus, from now on we focus on the case of  $m=2$, $(i,j)=(1,2)$, and consider the set     
\begin{equation}\label{eqn:burer_2}
    \cC_2=\set{Z\succeq 0:~ \ip{Q,Z} = 0,
    ~PZd^i\in\L^{n+2},\,\forall i\in [2] ,~ (d^1)^\top Z d^2 \ge 0}.    
    \end{equation}

We start with some preliminaries that relate to properties of extreme rays of $\cC_2$ that are spanned by rank-one matrices in Section~\ref{sec:Kron:preliminary}.
The key ingredient of proving Theorem \ref{thm:main_refined} is a decomposition of non-rank-one extreme rays of $\cC_2$, which we present in Section~\ref{sec:Kron:decomposition}. We believe this result is of separate interest in order to further understand the geometry of relaxation \eqref{eqn:burer}. 
Finally, in Section~\ref{sec:Kron:proof} we give the proof of Theorem~\ref{thm:main_refined}.

\subsection{Preliminaries}\label{sec:Kron:preliminary}

We start with the following simple fact on the relationship between matrices $P,Q$ and the second-order cone. Given a set $\cX$, we let $\bd(\cX)$ denote its boundary.
\begin{fact}\label{fact:Q-P_relation}
    For any $w\in\R^{n+2}$, the relation $w^{\top}Qw\ge 0$ holds if and only if $Pw\in\L^{n+2}\cup(-\L^{n+2})$. Moreover,  $w^{\top}Qw= 0$ holds if and only if $Pw\in \left(\bd(\L^{n+2})\cup\bd(-\L^{n+2})\right)$. 
\end{fact}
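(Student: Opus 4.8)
The plan is to prove the fact by a direct computation in coordinates adapted to the block structure of $P$ and $Q$. Write $w=(x,s,u)\in\R^n\times\R\times\R$, so that $w_{n+1}=s$ and $w_{n+2}=u$. A one-line block multiplication gives
\[
w^{\top}Qw = 2\bigl(su-\norm{x}_2^2\bigr),\qquad Pw=(2x,\ s-u,\ s+u),
\]
so that $w^{\top}Qw\ge 0$ is literally the scalar inequality $\norm{x}_2^2\le su$, while $w^{\top}Qw= 0$ is $\norm{x}_2^2= su$. The statement thus reduces to re-expressing membership of $Pw$ in $\L^{n+2}$, in $-\L^{n+2}$, and on their boundaries in these same terms.

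Next I would unwind the Lorentz-cone conditions. By definition $Pw\in\L^{n+2}$ iff $\sqrt{4\norm{x}_2^2+(s-u)^2}\le s+u$; this forces $s+u\ge 0$, and under that sign condition squaring is reversible, so (using $(s+u)^2-(s-u)^2=4su$) it is equivalent to the conjunction $\bigl[s+u\ge 0\text{ and }\norm{x}_2^2\le su\bigr]$. Applying the same reasoning to $-Pw$ gives $Pw\in-\L^{n+2}$ iff $\bigl[s+u\le 0\text{ and }\norm{x}_2^2\le su\bigr]$. Taking the union and noting that every real number satisfies $s+u\ge0$ or $s+u\le0$, the sign condition becomes vacuous: $Pw\in\L^{n+2}\cup(-\L^{n+2})\iff\norm{x}_2^2\le su\iff w^{\top}Qw\ge 0$, which is the first assertion. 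For the boundary claim, I run the identical argument with equalities in place of inequalities: $Pw\in\bd(\L^{n+2})\iff\bigl[s+u\ge0\text{ and }\norm{x}_2^2=su\bigr]$ and $Pw\in\bd(-\L^{n+2})\iff\bigl[s+u\le0\text{ and }\norm{x}_2^2=su\bigr]$, where I take $\bd(\L^{n+2})=\set{y:\norm{(y_1,\dots,y_{n+1})}_2=y_{n+2}}$ (which already entails $y_{n+2}\ge0$). Unioning again kills the sign condition and yields $Pw\in\bd(\L^{n+2})\cup\bd(-\L^{n+2})\iff\norm{x}_2^2=su\iff w^{\top}Qw=0$.

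I do not expect a genuine obstacle. The only points requiring any care are (i) keeping the squaring step reversible by carrying along the sign condition $s+u\ge0$ (respectively $\le 0$), and (ii) checking that the degenerate configurations — $su=0$, which forces $x=0$, and the apex $Pw=0$ — are handled correctly, which is immediate by inspection. The convenient feature that makes all the bookkeeping trivial is that, since we only ever care about the union $\L^{n+2}\cup(-\L^{n+2})$ (and its boundary), the sign of $s+u$ is never actually constrained and drops out automatically.
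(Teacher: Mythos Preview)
Your proposal is correct and follows essentially the same approach as the paper: both compute $w^\top Qw$ and $Pw$ in the block coordinates $(x,s,u)$ and reduce the claim to the identity $(s+u)^2-(s-u)^2=4su$. Your write-up is in fact slightly more careful than the paper's in tracking the sign condition on $s+u$ to justify that squaring is reversible, but the underlying argument is identical.
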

\begin{proof}
    Write $w^\top=(x^\top,t,x_0)$. Then,  $w^{\top}Qw\ge 0$ if and only if $tx_0\ge \|x\|_2^2$ which is equivalent to $4\|x\|_2^2+(t-x_0^2)\le (t+x_0)^2$.
    Thus, $w^{\top}Qw\ge 0$ if and only if     
    \[
    Pw=\begin{bmatrix}2x\\t-x_0\\t+x_0\end{bmatrix}\in \L^{n+2}\cup(-\L^{n+2}). 
    \]
    Similarly, $w^{\top}Qw=0$ holds if and only if $4\|x\|_2^2+(t-x_0^2)= (t+x_0)^2$, which holds if and only if $Pw \in \left(\bd(\L^{n+2})\cup\bd(-\L^{n+2})\right)$.  
\end{proof}

We characterize rank-one extreme rays of $\cC_m$ (recall from (\ref{eqn:burer})) for any $m\ge 2$. 

\begin{proposition}\label{prop:rank_one_rays}
    Let $m\ge 2$ and $w^{\top}:=(x^{\top},t,x_0)$ be such that $ww^{\top}$ spans an extreme ray of $\cC_m$. Then, $\|x\|_2^2=tx_0$ and $x_0\ne 0$. Moreover, if $x_0>0$, then $w^{\top}d^i\ge 0,\forall i\in [m]$. 
\end{proposition}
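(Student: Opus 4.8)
The plan is to read everything off from the explicit forms of $Q$, $P$, and the $d^i$, combined with Fact~\ref{fact:Q-P_relation}. Since $ww^\top$ spans an extreme ray of $\cC_m$, it is a \emph{nonzero} element of $\cC_m$, so $w\neq 0$ and $ww^\top$ satisfies every constraint appearing in the definition of $\cC_m$. Write $w^\top=(x^\top,t,x_0)$. The constraint $\ip{Q,ww^\top}=w^\top Qw=0$ together with Fact~\ref{fact:Q-P_relation} yields at once the first claim $\|x\|_2^2=tx_0$, and also that $Pw=(2x^\top,\,t-x_0,\,t+x_0)^\top$ lies on $\bd(\L^{n+2})\cup\bd(-\L^{n+2})$.

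To rule out $x_0=0$, I would argue by contradiction. If $x_0=0$, then $\|x\|_2^2=tx_0=0$ forces $x=0$, so $w=(0,\dots,0,t,0)^\top$ and $Pw=(0,\dots,0,t,t)^\top$. Using that the $(n+1)$-st coordinate of each $d^i$ equals $-1$, one computes $w^\top d^i=-t$, so the SOC-RLT constraint for any fixed $i\in[m]$ reads
\[
P(ww^\top)d^i=(Pw)(w^\top d^i)=-t^2\,(0,\dots,0,1,1)^\top\in\L^{n+2}.
\]
Every vector in $\L^{n+2}$ has nonnegative last coordinate, so $-t^2\ge 0$, forcing $t=0$ and hence $w=0$, a contradiction. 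Therefore $x_0\neq 0$.

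Finally, for the ``moreover'' part, suppose $x_0>0$. Then $t=\|x\|_2^2/x_0\ge 0$, so the last coordinate of $Pw$, namely $t+x_0$, is strictly positive. For each $i\in[m]$, the SOC-RLT constraint $P(ww^\top)d^i=(Pw)(w^\top d^i)\in\L^{n+2}$ forces its last coordinate $(t+x_0)(w^\top d^i)$ to be nonnegative, and dividing by $t+x_0>0$ gives $w^\top d^i\ge 0$, as desired.

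All the arithmetic here is elementary, so there is no serious obstacle; the one place that needs a little care is ruling out $x_0=0$, where it is precisely the value $d^i_{n+1}=-1$ that makes the SOC-RLT constraint collapse to $-t^2\ge 0$. It is worth noting that the linear-RLT constraints $(d^i)^\top Z d^j\ge 0$ are not used in this proposition at all (they would only enforce a common sign among the $w^\top d^i$, which here already follows from the SOC-RLT constraints).
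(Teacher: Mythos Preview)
Your proof is correct and follows essentially the same route as the paper: both extract $\|x\|_2^2=tx_0$ from $\ip{Q,ww^\top}=0$, use the SOC-RLT constraint together with $d^i_{n+1}=-1$ to derive the $-t^2\ge 0$ contradiction when $x_0=0$, and then read off $w^\top d^i\ge 0$ from $(Pw)(w^\top d^i)\in\L^{n+2}$ with $Pw$ having positive last coordinate. The only cosmetic difference is that the paper phrases the last step via $Pw\in\L^{n+2}\setminus\{0\}$ and pointedness of $\L^{n+2}$, whereas you look directly at the last coordinate, which is equivalent.
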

\begin{proof}
As $Z=ww^\top$ spans an extreme ray of $\cC_m$, $w\neq0$. 
    $\ip{Q,ww^{\top}}=0$ implies $\|x\|_2^2=tx_0$. Assume for contradiction that $x_0=0$. Then, $x=0$ and $w^{\top}=\alpha (0^{\top},1,0)$ for some $\alpha\in \R$. Since $d^i_{n+1}=-1$ for all $i\in[m]$, we have $(Pww^{\top}d^i)^{\top}=-\alpha^2 (0^{\top},1,1)$. Thus, the constraint $Pww^{\top}d^i\in\L^{n+2}$ implies $\alpha=0$, and so $w=0$ which is a contradiction. Hence, we conclude $x_0\ne 0$.

    Since $\ip{Q,ww^{\top}}=0$, from Fact \ref{fact:Q-P_relation} we deduce $Pw\in \L^{n+2}\cap (-\L^{n+2})$. After choosing $x_0>0$ we have $Pw\in \L^{n+2}$. Since $P$ is invertible and $w\ne 0$ we have $Pw\ne 0$. Hence, the constraint $Pw(w^{\top}d^i)=PZd^i\in \L^{n+2}$ implies $w^{\top}d^i\ge 0$ for all $i\in [m]$. 
\end{proof}

In particular, Proposition~\ref{prop:rank_one_rays} states that if $\Zb=ww^\top$ (i.e., $\rank(\Zb)=1$) spans an extreme ray of $\cC_m$, then by properly scaling $w$ so that $w_{n+2}=1$ we observe that  $w\in \widetilde{S}$. 
Then, by the origin of the Kronecker inequalities we deduce that $\cK_{ij}(\pi_X(\Zb),\pi_x(\Zb))\succeq 0$ holds for all $i,j\in[m]$.

\subsection{Decomposition of non-rank-one extreme rays of $\cC_2$}
\label{sec:Kron:decomposition}

\citet{burer2024slightly} considers the following slightly modified version of $\cC_2$: 
    \begin{equation}\label{eqn:burer_exact}
    \tilde{\cC}_2:=\set{Z\succeq 0:~ \ip{Q,Z} \ge 0,
    ~PZd^i\in\L^{n+2},\,\forall i\in [2] ,~ (d^1)^\top Z d^2 = 0}.    
    \end{equation}

Recall that a closed convex cone contained inside a PSD cone is called \emph{rank one generated} (ROG) if all of its extreme rays are generated by rank one matrices. See \cite{hildebrand2016spectrahedral} and \cite{argue2020necessary} for properties of ROG cones. 

We restate the following results from \cite{burer2024slightly} on the following cones being ROG. 

\begin{theorem}[{restatement of \cite[Theorem 1]{burer2024slightly}}]\label{thm:ROG_twoSOC}
    $\tilde{\cC}_2$ is ROG. 
\end{theorem}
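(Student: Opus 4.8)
The plan is to establish ROG-ness directly by showing that $\tilde{\cC}_2$ --- a pointed closed convex cone inside $\S^{n+2}_+$, hence the conic hull of its extreme rays --- has no extreme ray of rank at least $2$. Let $Z$ span an extreme ray of rank $r$ and let $\cF\cong\S^r_+$ be the minimal face of $\S^{n+2}_+$ containing $Z$, so that $Z$ lies in the relative interior of $\cF$; extremality of $\R Z$ forces the active defining constraints of $\tilde{\cC}_2$, viewed as linear equalities on $\spann(\cF)$, to cut this space down to $\R Z$. Counting the active constraints that are nontrivial on $\cF$: the equality $(d^1)^\top Z d^2=0$ contributes at most one hyperplane; $\ip{Q,Z}\ge0$ contributes one exactly when tight; and each $PZd^i\in\L^{n+2}$ contributes one exactly when $PZd^i\in\bd(\L^{n+2})\setminus\{0\}$ (the Lorentz cone is smooth off its apex, so its tangent cone there is a halfspace), and none otherwise (if $PZd^i=0$ then $d^i\perp\col(Z)$, so the constraint is inert along $\cF$). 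Hence at most four hyperplanes are active, so $\binom{r+1}{2}\le 4+1$ and $r\le2$.

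It remains to rule out $r=2$. Write $Z=v_1v_1^\top+v_2v_2^\top$ and $V:=\col(Z)$, and note that replacing $(v_1,v_2)$ by $(v_1,v_2)R$ for orthogonal $R$ preserves $v_1v_1^\top+v_2v_2^\top=Z$. Suppose first $d^1\not\perp V$ and $d^2\not\perp V$: rotate the frame so $d^1\cdot v_2=0$; then $d^1\cdot v_1\ne 0$, so $0=(d^1)^\top Z d^2=(d^1\cdot v_1)(d^2\cdot v_1)$ forces $d^2\cdot v_1=0$. Hence $PZd^1=(d^1\cdot v_1)Pv_1\in\L^{n+2}$ with $d^1\cdot v_1\ne0$ yields $Pv_1\in\L^{n+2}\cup(-\L^{n+2})$, so $v_1^\top Q v_1\ge0$ by Fact~\ref{fact:Q-P_relation}, and symmetrically $v_2^\top Q v_2\ge0$. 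Then each $v_kv_k^\top\in\tilde{\cC}_2$ --- indeed $\ip{Q,v_kv_k^\top}\ge0$, the term $(d^1\cdot v_k)(d^2\cdot v_k)$ is $0$, and $P(v_kv_k^\top)d^i=(d^i\cdot v_k)Pv_k$ equals $0$ or $PZd^i\in\L^{n+2}$ --- so $Z=v_1v_1^\top+v_2v_2^\top$ decomposes into rank-one members of $\tilde{\cC}_2$, contradicting extremality.

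The remaining case is $d^1\perp V$ (the $d^2\perp V$ case being symmetric). Then $PZd^1=0$ and $(d^1)^\top Z d^2=0$ hold trivially and are inert on $\cF$, so --- a rank-two extreme ray needing at least two independent active hyperplanes --- both $\ip{Q,Z}=0$ and $PZd^2\in\bd(\L^{n+2})\setminus\{0\}$ must hold; in particular $d^2\not\perp V$. If $w\mapsto w^\top Q w$ vanishes on all of $V$, rotating so $d^2\cdot v_2=0$ and arguing as above splits $Z$. Otherwise $w\mapsto w^\top Q w$ is a nonzero form on $V$ that pairs to $\ip{Q,Z}=0$ against the positive definite form $Z|_V$, hence is indefinite on $V$; it therefore vanishes on a unique pair of lines $\ell_1,\ell_2\subseteq V$ along which $Z$ splits as $Z=w_1w_1^\top+w_2w_2^\top$ with $w_k\in\ell_k$ and $w_1^\top Q w_1=w_2^\top Q w_2=0$, and this is the only decomposition of $Z$ with $w_k^\top Q w_k=0$ for both $k$. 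By Fact~\ref{fact:Q-P_relation}, $Pw_1,Pw_2$ lie on $\bd(\L^{n+2})\cup\bd(-\L^{n+2})$ and are non-parallel. Now $PZd^2=(d^2\cdot w_1)Pw_1+(d^2\cdot w_2)Pw_2$ is a nonzero point of $\bd(\L^{n+2})$; a one-line computation using self-duality of $\L^{n+2}$ shows a nonzero combination of two non-parallel boundary rays of $\pm\L^{n+2}$ can lie on $\bd(\L^{n+2})$ only with one coefficient zero, so $d^2\cdot w_1=0$ or $d^2\cdot w_2=0$. Together with $d^1\perp V\ni w_k$ and $w_k^\top Q w_k=0$, each $w_kw_k^\top$ then lies in $\tilde{\cC}_2$, so $Z$ is again not extreme.

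I expect this last case --- some $d^i\perp\col(Z)$, which forces $\ip{Q,Z}=0$ --- to be the main obstacle, since it is the only place where $\ip{Q,Z}\ge0$ alone does not suffice and one must exploit both the exact equality $\ip{Q,Z}=0$ (to pin down the decomposition) and the fine boundary geometry of the Lorentz cone; alternatively, that case could be dispatched by reducing it to the classically exact single-ball lifted relaxation at the cost of one citation.
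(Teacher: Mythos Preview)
The paper does not supply its own proof of this statement: Theorem~\ref{thm:ROG_twoSOC} is simply quoted from \cite[Theorem~1]{burer2024slightly} and used as a black box. So there is no proof in the paper to compare against, and your attempt has to be assessed on its own merits.

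Your rank-two case analysis is correct and clean, including the delicate subcase where you diagonalize $Q|_V$ along its two null lines and use the extreme-ray structure of $\L^{n+2}$ to force one coefficient in $PZd^2 = (d^2\!\cdot w_1)\,Pw_1 + (d^2\!\cdot w_2)\,Pw_2$ to vanish. The problem is the dimension count that is supposed to reduce everything to $r\le 2$. You argue that an active SOC constraint $PZd^i \in \bd(\L^{n+2})\setminus\{0\}$ ``contributes one'' because the tangent cone of $\L^{n+2}$ there is a halfspace. But extremality of $\R_+Z$ is governed by the span of the \emph{minimal face} of each constraint set at $Z$ (equivalently, the lineality of the \emph{radial} cone), not by the tangent cone. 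At a nonzero boundary point $u\in\bd(\L^{n+2})$ the minimal face of $\L^{n+2}$ is the single ray $\R_+u$, so the two-sided feasible perturbations for the SOC constraint are exactly $\{W: PWd^i \in \R\cdot PZd^i\}$. Restricted to $\spann(\cF)\cong\S^r$, the linear map $W\mapsto PWd^i$ has image $P\cdot\col(Z)$ of dimension $r$, and hence this subspace has codimension $r-1$, not $1$. With the corrected count,
\[
\dim L(Z)\;\ge\;\binom{r+1}{2}-1-1-2(r-1)\;=\;\frac{r(r-3)}{2},
\]
which rules out $r\ge 4$ but \emph{not} $r=3$. Your proof is therefore missing the entire rank-$3$ case, and the rank-$2$ arguments do not extend in any obvious way: for $r=3$ the null set of $Q|_V$ is a quadratic cone rather than a pair of lines, so there is no ``unique decomposition'' to leverage. (Note that for $r=2$ one has $r-1=1$, so your count is accidentally correct there, which is why the rank-$2$ analysis goes through.)
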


\begin{proposition}[{restatement of \cite[Lemma 
3]{burer2024slightly}}]\label{prop:ROG_oneSOC}
    For any $d\in\R^{n+2}$, the set 
    \[
    \cC_1:=\set{Z\succeq 0:~ \ip{Q,Z} \ge 0,~PZd\in\L^{n+2}}
    \]
    is ROG. 
\end{proposition}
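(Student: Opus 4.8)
The plan is to establish that $\cC_1$ is ROG by contradiction. Suppose $Z\in\cC_1$ spans an extreme ray with $r:=\rank(Z)\ge 2$, and factor $Z=VV^\top$ with $V\in\R^{(n+2)\times r}$ of full column rank. I will write $Z$ as a sum of two members of $\cC_1$, one of which is not a nonnegative multiple of $Z$; since an extreme ray forces every such summand to be a nonnegative multiple of $Z$, this is the desired contradiction. The construction depends on the location of $PZd$, which belongs to $\L^{n+2}$ because $Z\in\cC_1$, so we are in one of two cases: (i) $Zd=0$ or $PZd\in\inter(\L^{n+2})$; (ii) $Zd\ne0$ and $PZd\in\bd(\L^{n+2})$. (These are exhaustive: $P$ invertible gives $PZd=0\iff Zd=0$.)

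In case (i) I would perturb within the range of $Z$. For $H\in\S^r$ let $Z(H,\epsilon):=V(I_r+\epsilon H)V^\top$; for $\abs{\epsilon}$ small this is positive semidefinite with the same range as $Z$, obeys $Z(H,\epsilon)+Z(-H,\epsilon)=2Z$, and (as $V$ has full column rank) is a scalar multiple of $Z$ only if $H\in\R I_r$. Positive semidefiniteness is automatic; $\ip{Q,Z(\pm H,\epsilon)}\ge 0$ holds for small $\epsilon$ with no condition on $H$ when $\ip{Q,Z}>0$, and when $\ip{Q,Z}=0$ it is equivalent to the single linear equation $\ip{V^\top QV,H}=0$, whose solution space contains $I_r$ (since $\ip{V^\top QV,I_r}=\ip{Q,Z}=0$) and has dimension $\ge r(r+1)/2-1\ge 2$, hence strictly contains $\R I_r$; and $PZ(\pm H,\epsilon)d\in\L^{n+2}$ holds for small $\epsilon$ when $PZd\in\inter(\L^{n+2})$, while when $Zd=0$ one has $V^\top d=0$ and therefore $PZ(\pm H,\epsilon)d=\pm\epsilon\,PVH(V^\top d)=0\in\L^{n+2}$. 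Choosing such an $H\notin\R I_r$ and a small $\epsilon>0$, the identity $Z=\tfrac12 Z(H,\epsilon)+\tfrac12 Z(-H,\epsilon)$ with $\tfrac12 Z(\pm H,\epsilon)\in\cC_1$ contradicts extremality.

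Case (ii) is the main obstacle, since a perturbation tangent to the boundary of the curved cone $\L^{n+2}$ generically exits $\L^{n+2}$; here I would instead subtract an explicit rank-one matrix. Put $w:=Zd\ne0$, so $PZd=Pw\in\bd(\L^{n+2})$ and Fact~\ref{fact:Q-P_relation} gives $w^\top Qw=0$. Let $\theta:=d^\top Zd=d^\top w>0$ (positivity because $d^\top Zd=0$ with $Z\succeq0$ would force $Zd=0$), and set $R:=\theta^{-1}ww^\top$. Then $Rd=w=Zd$, so $PRd=PZd\in\L^{n+2}$ and $P(Z-R)d=0\in\L^{n+2}$; $\ip{Q,R}=\theta^{-1}w^\top Qw=0$ and $\ip{Q,Z-R}=\ip{Q,Z}\ge0$; and $Z\succeq R$ by the Cauchy--Schwarz inequality $(v^\top Zd)^2\le(v^\top Zv)(d^\top Zd)$ for the positive semidefinite form $(u,v)\mapsto u^\top Zv$. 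Thus $R,Z-R\in\cC_1$ and $Z=R+(Z-R)$ with $R$ a nonzero rank-one matrix; since $\rank(Z)\ge2$, $R$ is not a nonnegative multiple of $Z$, contradicting extremality. Having a contradiction in both cases, every extreme ray of $\cC_1$ is rank one, so $\cC_1$ is ROG. The only steps that are not entirely routine are the two perturbation-feasibility verifications in case (i) and the rank-one peeling of case (ii), particularly the observations $\theta>0$ and $Z\succeq(Zd)(Zd)^\top/(d^\top Zd)$.
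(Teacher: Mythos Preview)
The paper does not supply its own proof of this proposition; it is quoted verbatim from \cite{burer2024slightly} and used as a black box. Your argument is therefore an independent proof, and it is correct.

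Your two-case split is the natural one. In case~(i) the range-preserving perturbation $Z(H,\epsilon)=V(I_r+\epsilon H)V^\top$ is the standard device for showing extreme rays of spectrahedral cones have low rank; the only place care is needed is when $\langle Q,Z\rangle=0$, and your dimension count (the hyperplane $\{H:\langle V^\top QV,H\rangle=0\}$ has dimension at least $r(r+1)/2-1\ge 2$ for $r\ge 2$ and contains $I_r$) correctly produces an admissible $H\notin\R I_r$. In case~(ii) the rank-one ``peeling'' $R=\theta^{-1}(Zd)(Zd)^\top$ is exactly right: the three verifications $(Z-R)d=0$, $\langle Q,R\rangle=0$, and $Z\succeq R$ via Cauchy--Schwarz for the PSD bilinear form all go through, and the key enabling fact is $\theta=d^\top Zd>0$, which you justify correctly from $Zd\ne 0$ and $Z\succeq 0$. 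This peeling idea---using the boundary condition on $\L^{n+2}$ together with Fact~\ref{fact:Q-P_relation} to force $w^\top Qw=0$, so that the subtracted rank-one piece automatically satisfies the linear constraint---is the same mechanism that drives several ROG proofs in this line of work (e.g., Sturm--Zhang and the arguments in \cite{burer2024slightly}), so while I cannot line your proof up step-by-step against Burer's, the structure is standard for the genre.
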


We also need the following straightforward lemma on the faces of ROG cones. 

    \begin{lemma}[part of Lemma 3 in \cite{argue2020necessary}]\label{lem:ROG_face}
        Let $K$ be an ROG cone, and $\ip{\alpha,\xi}\le \beta$ be a valid inequality for $K$. Then, $K\cap \{\xi:~ \ip{\alpha,\xi}=\beta\}$ is ROG. 
    \end{lemma}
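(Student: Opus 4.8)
The plan is to reduce the statement to the case $\beta=0$, recognize the resulting slice as a face of $K$, and then invoke the fact that faces of pointed closed cones inherit their extreme rays. First I would record two consequences of $\ip{\alpha,\xi}\le\beta$ being valid over the \emph{cone} $K$: scaling any $\xi\in K$ by arbitrarily large positive factors forces $\ip{\alpha,\xi}\le 0$ for all $\xi\in K$, and taking $\xi=0\in K$ gives $\beta\ge 0$. If $\beta>0$ the affine hyperplane $\{\ip{\alpha,\xi}=\beta\}$ misses $K$ entirely, so $K\cap\{\xi:\ip{\alpha,\xi}=\beta\}=\emptyset$ is vacuously ROG; hence it suffices to treat $\beta=0$. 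Set $F:=K\cap\{\xi:\ip{\alpha,\xi}=0\}$, which is a closed convex cone (the intersection of the closed convex cone $K$ with a linear subspace), and which is pointed because $F\subseteq K\subseteq\S^{n}_+$ and $\S^{n}_+$ contains no line.

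Next I would verify that $F$ is a face of $K$: if $\xi_1,\xi_2\in K$ and $\xi_1+\xi_2\in F$, then $\ip{\alpha,\xi_1}+\ip{\alpha,\xi_2}=0$ with $\ip{\alpha,\xi_1},\ip{\alpha,\xi_2}\le 0$, which forces $\ip{\alpha,\xi_1}=\ip{\alpha,\xi_2}=0$, i.e., $\xi_1,\xi_2\in F$. From the face property the usual argument shows every extreme ray of $F$ is an extreme ray of $K$: if $\R_+\zeta\subseteq F$ is extreme in $F$ and $\zeta=\zeta_1+\zeta_2$ with $\zeta_1,\zeta_2\in K$, then $\zeta_1,\zeta_2\in F$ by the face property, and extremality of $\R_+\zeta$ within $F$ puts both $\zeta_i$ on $\R_+\zeta$.

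Finally, since $K$ is ROG, every extreme ray of $K$---and hence, by the previous paragraph, every extreme ray of $F$---is spanned by a rank-one positive semidefinite matrix. Because $F$ is a pointed closed convex cone in the finite-dimensional space $\S^{n}$, it coincides with the conical hull of its extreme rays (cut $F$ with a hyperplane along a compact base, apply the Minkowski/Krein--Milman theorem to that base, and take the conical hull). Thus $F$ is generated by rank-one matrices, i.e., $F$ is ROG, which is the claim. I do not expect a genuine obstacle here; the only points needing a little care are the reduction to $\beta=0$ and the invocation that, in finite dimensions, a pointed closed convex cone is the conical hull of its extreme rays.
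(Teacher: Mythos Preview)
Your proof is correct and follows essentially the same idea as the paper's: both exploit that $K\cap\{\ip{\alpha,\xi}=\beta\}$ is a face of $K$, so any decomposition of an element (in particular, the rank-one decomposition guaranteed by $K$ being ROG) stays inside the face, forcing extreme rays of the face to be rank one. Your version is slightly more explicit---you separately handle the vacuous case $\beta>0$ and phrase the argument via ``extreme rays of a face are extreme rays of the ambient cone''---while the paper directly decomposes an extreme element and observes the summands must hit the valid inequality at equality; note also that your final paragraph on the conical hull of extreme rays is superfluous, since by the paper's definition ROG means precisely that every extreme ray is rank one, which you already established.
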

    \begin{proof}
        Let $\xi^*$ span an extreme ray of $K\cap \{\xi:~ \ip{\alpha,\xi}=\beta\}$. Since $\xi^*\in K$ and $K$ is ROG, $\xi^*$ admits a representation in terms of conic combination of rank one matrices in $K$, i.e., $\xi^* = \sum_{\ell\in[r]} w^\ell (w^\ell)^\top$ for some $r$ and $w^\ell (w^\ell)^\top \in K$. Since $\ip{\alpha,\xi}\le \beta$ is a valid inequality for $K$ and $\xi^*\in K\cap \{\xi:~ \ip{\alpha,\xi}=\beta\}$, all these rank-one matrices $w^\ell (w^\ell)^\top$ for all $\ell\in[r]$ must also satisfy $\ip{\alpha,\xi}\le \beta$ at equality. Hence, $K\cap \{\xi:~ \ip{\alpha,\xi}=\beta\}$ is ROG. 
    \end{proof}

Although $\cC_2$ is not ROG, we have the following characterization of its non-rank-one extreme rays. 
    \begin{theorem}\label{thm:non_rank_one_extray}
        Let $\Zb$ span an extreme ray of $\cC_2$. If $r=\rank(\Zb)>1$, then there exists $x^1,\ldots,x^r\in\R^n,\alpha_1,\ldots,\alpha_r>0$, such that 
        \[
        \Zb=\sum_{i=1}^r \alpha_i\begin{bmatrix}
            x^i\\ \|x^i\|_2^2 \\1
        \end{bmatrix}\begin{bmatrix}
            x^i\\ \|x^i\|_2^2 \\1
        \end{bmatrix}^{\top},
        \]
        where $\|x^1-c^1\|_2<r_1,~\|x^1-c^2\|_2<r_2$, $\|x^i-c^1\|_2=r_1$ 
        for all $2\le i\le r$, and $P\Zb d^2\in \bd (\L^{n+2})$.  
    \end{theorem}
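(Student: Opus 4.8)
The plan is to reduce $\Zb$ to a tightly controlled rank-one decomposition by combining the ROG facts already available with a classical one-constraint rank-one decomposition lemma. Throughout I would use that any $w=(x,\|x\|_2^2,1)$ satisfies $w^\top Qw=0$, hence $Pw\in\bd(\L^{n+2})$ by Fact~\ref{fact:Q-P_relation}, and that $\ip{d^i,w}=r_i^2-\|x-c^i\|_2^2$ for such $w$. Fix an extreme ray $\Zb$ of $\cC_2$ with $r:=\rank(\Zb)>1$. \emph{Step 1 (tightness).} I would first show every inequality defining $\cC_2$ other than $Z\succeq0$ and $\ip{Q,Z}=0$ is tight at $\Zb$, i.e.\ $(d^1)^\top\Zb d^2>0$ and $P\Zb d^1,\,P\Zb d^2\in\bd(\L^{n+2})$. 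Each follows by contradiction through an ROG cone. If $(d^1)^\top\Zb d^2=0$, then $\Zb$ lies in the face $\tilde{\cC}_2\cap\{\ip{Q,Z}=0\}$ of the ROG cone $\tilde{\cC}_2$ (Theorem~\ref{thm:ROG_twoSOC}), which is ROG by Lemma~\ref{lem:ROG_face}; being contained in $\cC_2$, this face has $\Zb$ as an extreme ray, forcing $\rank(\Zb)=1$. If $P\Zb d^1\in\inter(\L^{n+2})$, then --- using $(d^1)^\top\Zb d^2>0$ just obtained --- both $PZd^1\in\L^{n+2}$ and $(d^1)^\top Zd^2\ge0$ hold strictly at $\Zb$, so dropping them leaves $\{Z\succeq0:\ip{Q,Z}=0,\ PZd^2\in\L^{n+2}\}$, a face of the ROG cone $\cC_1$ of Proposition~\ref{prop:ROG_oneSOC} (with $d=d^2$), hence ROG; a standard perturbation $\Zb\pm\epsilon W$ (using that the dropped constraints are open conditions) makes $\Zb$ extreme there too, again forcing $\rank(\Zb)=1$. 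The case of $P\Zb d^2$ is symmetric.

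\emph{Step 2 (decompose and collapse the interior part).} Since $\Zb\in\cC_1$ (with $d=d^1$) and $\cC_1$ is ROG, I would write $\Zb=\sum_k\gamma_k v^k(v^k)^\top$ as a finite conic combination of its extreme rays ($\gamma_k>0$); because $\ip{Q,\Zb}=0$ while $\ip{Q,\cdot}\ge0$ on $\cC_1$, every $(v^k)^\top Qv^k=0$, so by Proposition~\ref{prop:rank_one_rays} we may scale $v^k=(x^k,\|x^k\|_2^2,1)$ with $\ip{d^1,v^k}\ge0$. Put $A=\{k:\ip{d^1,v^k}>0\}$, $B=\{k:\ip{d^1,v^k}=0\}$. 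Then $A\ne\emptyset$, since otherwise $(d^1)^\top\Zb d^2=\sum_k\gamma_k\ip{d^1,v^k}\ip{d^2,v^k}=0$, contradicting Step~1; and $P\Zb d^1=\sum_{k\in A}\gamma_k\ip{d^1,v^k}Pv^k$ is a strictly positive combination of nonzero boundary rays of $\L^{n+2}$. As this sum lies on $\bd(\L^{n+2})$ by Step~1, and a positive combination of boundary rays of a Lorentz cone is on the boundary only when the rays are pairwise parallel, all $v^k$ with $k\in A$ coincide. Merging them gives $\Zb=\alpha_1 v^1(v^1)^\top+\Zb_B$, where $v^1=(x^1,\|x^1\|_2^2,1)$, $\ip{d^1,v^1}>0$, $\alpha_1>0$, and $\Zb_B:=\sum_{k\in B}\gamma_k v^k(v^k)^\top$ is PSD with $\ip{Q,\Zb_B}=0$ and $P\Zb_B d^1=0$. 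Evaluating $(d^1)^\top\Zb d^2$ on this decomposition (the $\Zb_B$-part vanishes as $\ip{d^1,v^k}=0$ for $k\in B$) gives $(d^1)^\top\Zb d^2=\alpha_1\ip{d^1,v^1}\ip{d^2,v^1}>0$, so $\ip{d^2,v^1}>0$; equivalently $\|x^1-c^1\|_2<r_1$ and $\|x^1-c^2\|_2<r_2$.

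\emph{Step 3 (exact rank via a sphere).} Each $v^k$ with $k\in B$ satisfies $\|x^k-c^1\|_2=r_1$, so $\col(\Zb_B)=\spann\{v^k:k\in B\}$ lies in the $(n+1)$-dimensional linear span of $\{(x,\|x\|_2^2,1):\|x-c^1\|_2=r_1\}$, while $v^1$ does not (as $\ip{d^1,v^1}\ne0$). Since $\Zb=\alpha_1 v^1(v^1)^\top+\Zb_B$ is a sum of PSD matrices, $\col(\Zb)=\spann\{v^1\}+\col(\Zb_B)$, and this sum is direct; hence $\rank(\Zb_B)=r-1$. Substituting $x=c^1+r_1z$ with $\|z\|_2=1$ exhibits an injective linear map $N$ with $\Zb_B=NYN^\top$ for some PSD $Y$ of rank $r-1$ satisfying $\ip{\Diag(1,-I_n),Y}=0$; applying the classical rank-one decomposition lemma for one equality constraint (see~\cite{sturm2003cones}) to $Y$ and pulling back through $N$ would give $\Zb_B=\sum_{i=2}^{r}\alpha_i(x^i,\|x^i\|_2^2,1)(x^i,\|x^i\|_2^2,1)^\top$ with $\alpha_i>0$ and $\|x^i-c^1\|_2=r_1$. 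Combining this with $\Zb=\alpha_1 v^1(v^1)^\top+\Zb_B$ and the tightness $P\Zb d^2\in\bd(\L^{n+2})$ from Step~1 yields the claimed decomposition.

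\emph{Main obstacle.} Step~1 is routine (ROG-ness plus a perturbation argument). The hard part is producing exactly $r$ summands rather than the finite-but-possibly-larger number that ROG-ness alone would supply. This is what forces the ``interior versus boundary'' split: collapsing the indices in $A$ to a single rank-one term via the Lorentz-cone ray argument, and then showing $v^1$ is linearly independent of $\col(\Zb_B)$ so that $\rank(\Zb_B)=r-1$; together with the reparametrization exhibiting $\Zb_B$ as an affine image of an element of $\{Y\succeq0:\ip{\Diag(1,-I_n),Y}=0\}$, on which the one-constraint decomposition lemma contributes precisely $r-1$ rank-one terms. Getting both reductions right --- the rank bookkeeping in particular --- is where the real work lies.
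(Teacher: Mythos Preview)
Your Steps~1 and~2 are exactly the paper's argument: tightness of $(d^1)^\top\Zb d^2>0$ and $P\Zb d^i\in\bd(\L^{n+2})$ via the ROG results for $\tilde\cC_2$ and $\cC_1$, then a rank-one decomposition of $\Zb$ inside the ROG face $\{Z\succeq0:\ip{Q,Z}=0,\ PZd^1\in\L^{n+2}\}$, normalization of each summand to the form $(x^k,\|x^k\|_2^2,1)$, and the strict-convexity-of-$\L^{n+2}$ argument applied to $P\Zb d^1\in\bd(\L^{n+2})$ to collapse the strictly-interior summands into a single $w^1$ with $\ip{d^2,w^1}>0$. The only substantive difference is your Step~3. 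The paper simply asserts the ROG decomposition with $r=\rank(\Zb)$ terms and never justifies why that many suffice; ROG-ness alone does not guarantee a $\rank$-term decomposition, so strictly speaking this is a gap in the paper's proof of the theorem as stated. Your rank bookkeeping ($v^1\notin(d^1)^\perp\supseteq\col(\Zb_B)$, hence $\rank(\Zb_B)=r-1$) together with the sphere reparametrization $x=c^1+r_1z$ and the one-constraint Sturm--Zhang lemma on $Y$ is a clean way to supply the exact count. For what it is worth, the paper's downstream use in Theorem~\ref{thm:better_than_kron} never exploits that the number of summands equals $\rank(\Zb)$, which is presumably why the paper is content to elide it. One minor quibble: Proposition~\ref{prop:rank_one_rays} is stated for $\cC_m$ with $m\ge2$, so invoking it for rank-one extreme rays of $\cC_1$ is technically off; the paper instead reproves that short computation inline, but the argument is identical.
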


    \begin{proof}
        Based on Theorem \ref{thm:ROG_twoSOC} and Lemma \ref{lem:ROG_face},  the set 
        \[
        \set{Z\succeq 0:~ \begin{array}{l}
        \ip{Q,Z} = 0,\\
        PZd^i\in\L^{n+2},\,\forall i\in[2],\\
        (d^1)^\top Z d^2 = 0
        \end{array} 
         }
        \]
        is ROG. Since $\Zb$ has rank $r>1$ we must have $(d^1)^{\top}\Zb d^2>0$. 
        In addition, as $\ip{Q,\Zb} = 0$, by Lemma \ref{lem:ROG_face}  we also have $P\Zb d^i\in \bd(\L^{n+2})$ for $i\in [2]$.
        
        Note that by Proposition \ref{prop:ROG_oneSOC} for each $i\in [2]$ the sets 
        \[
        \set{Z\succeq 0:~ \ip{Q,Z} = 0,~PZd^i\in\L^{n+2}}
        \]
         are ROG. 
         Thus, we can write $\Zb=\sum_{i=1}^r \alpha_i w^i (w^i)^{\top}$, where for each $i\in[r]$, $\alpha_i>0$ and $w^i$ satisfies $w^i\neq0$, $\ip{Q,w^i(w^i)^\top}=0,~P(w^i(w^i)^\top)d^1\in \L^{n+2}$. 
        For any $i\in[r]$, let $(w^i)^{\top}=((x^i)^{\top},t_i,y_i)$. Replacing $w^i$ by $-w^i$ if necessary, we may assume $y_i\ge 0$. Also, the constraint $\ip{Q,w^i(w^i)^{\top}}=0$ implies $t_iy_i=\|x^i\|_2^2$. We argue that $y_i\ne 0$, since otherwise we have $x^i=0$, $(w^i)^{\top}=(0,\beta,0)$ for some $\beta \ne 0$, and then as $d^i_{n+1}=-1$ we have $Pw^i(w^i)^{\top}d^1=-\beta^2\begin{bmatrix}
            0\\1\\1
        \end{bmatrix}$ which violates $Pw^i(w^i)^{\top}d^1\in\L^{n+2}$. Thus, $y_i> 0$. Upon rescaling $\alpha_i$ and $w^i$ we may assume $y_i=1$ and hence $t_i=\|x^i\|_2^2$. Thus, we deduce that each $i\in[r]$ the vector $w^i$ is of the form  
        \[
        w^i=\begin{bmatrix}
            x^i\\ \|x^i\|_2^2 \\1
        \end{bmatrix},~\text{ and }~  Pw^i\in \L^{n+2}.
        \]
        As $Pw^i ((w^i)^\top d^1)\in\L^{n+2}$ and $\L^{n+2}$ is a pointed convex cone, we also get $\ip{d^1,w^i}\ge 0$.
        
        Now note that $P\Zb d^1 = \sum_{i=1}^r \alpha_i \ip{d^1,w^i} Pw^i$, while we also know  $P\Zb d^1\in \bd(\L^{n+2})$ and $Pw^i\in \L^{n+2}$ and $\ip{d^1,w^i}\ge 0$ for all $ i\in [r]$. Then, since $\L^{n+2}$ is a strictly convex cone (i.e., the boundary does not contain any polyhedron of dimension at least two) and $\alpha_i>0$ for all $i\in[r]$, we deduce that exactly one of the terms $\ip{d^1,w^i}$ is positive and the rest are zero. Without loss of generality by rearranging the indices of $w^i$'s, we conclude that $\ip{d^1,w^1}>0$ and $\ip{d^1,w^i}=0$ for  all $2\le i\le r$. This implies $\|x^1-c^1\|_2<r_1$ and $\|x^i-c^1\|_2=r_1$ for all $2\le i\le r$. Then,
        \[
        0<(d^1)^{
        \top}\Zb d^2=\sum_{i=1}^r \ip{d^1,w^i} \ip{w^i, d^2}=\ip{d^1,w^1} \ip{w^1, d^2}.
        \]
        Hence, $\ip{w^1, d^2}>0$, i.e., $\|x^1-c^2\|_2<r_2$.
    \end{proof}

\subsection{Proof of Theorem \ref{thm:main_refined}}\label{sec:Kron:proof}

Recall the linear map $\cK_{ij}$ from (\ref{eqn:kron}). Since we are considering the case $m=2$, we denote the map as $\cK$. 

Let us now define $\cK$ more formally for our proof. For convenience, we denote the arrow matrices as $\cA_i(x):=\cA(x-c^i,r_i),~ i\in [2]$. Let $X_j$ denote the $j$-th column of $X$, and let $H_j(X,x)$ be the matrix obtained from $(x_j-c^2_j)\cA_1(x)$ by replacing $x_jx$ terms by $X_j$, i.e., 
    \[
    H_j(X,x):=\begin{bmatrix}
        (x_j-c^2_j)r_1 I & X_j-x_jc^1-c^2_jx+c^2_jc^1\\(X_j-x_jc^1-c^2_jx+c^2_jc^1)^{\top} & (x_j-c^2_j)r_1
    \end{bmatrix}.
    \]

    Thus, the Kronecker RLT constraint derived from the two ball constraints $\|x-c^i\|_2\le r_i,i\in [2]$ can be written as
    \[
    \cK(X,x)=\begin{bmatrix}
        r_2\cA_1(x) & & & H_1(X,x)\\
        & \ddots & & \vdots\\
        & & r_2\cA_1(x) & H_n(X,x)\\
        H_1(X,x) & \ldots & H_n(X,x) & r_2\cA_1(x)
    \end{bmatrix}\succeq 0.
    \]

Note that both $\cA_i(x)$ and $H_j(X,x)$ are affine maps of $(X,x)$, and thus $\cK(X,x)$ is an affine map of $(X,x)$ as well and so we arrive at the following result. 
    \begin{lemma}\label{lem:kron_equivalence}
        For any $x_1,\ldots,x_r\in\R^n$ and $\alpha_1,\ldots,\alpha_r\in \R$, we have
\[
        \cK \left(\sum_{i=1}^r \alpha_ix_ix_i^{\top},\sum_{i=1}^r \alpha_ix_i \right)=\sum_{i=1}^r \alpha_i \left(\cA_2(x)\otimes \cA_1(x) \right). 
\]
    \end{lemma}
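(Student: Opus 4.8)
The plan is to prove the identity first at a single point, where it is essentially the definition of $\cK$, and then to propagate it through the combination $\sum_i\alpha_i(x_ix_i^\top,x_i)$ using that $\cK$ is an affine map of $(X,x)$, as recorded just before the statement.

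First I would verify the pointwise identity $\cK(xx^\top,x)=\cA_2(x)\otimes\cA_1(x)$ for every fixed $x\in\R^n$. Writing the Kronecker product in $(n+1)\times(n+1)$ blocks indexed by the entries of $\cA_2(x)$, its $(k,\ell)$ block equals $(\cA_2(x))_{k\ell}\,\cA_1(x)$. Since $\cA_2(x)$ carries $r_2$ on its diagonal, the vector $x-c^2$ in its last row and column, and zeros elsewhere, the diagonal blocks ($k=\ell\le n$ and $k=\ell=n+1$) equal $r_2\,\cA_1(x)$, the off-diagonal blocks among the first $n$ indices vanish, and the block in position $(k,n+1)$ for $k\le n$ equals $(x_k-c^2_k)\,\cA_1(x)$. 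Matching this against the displayed block form of $\cK$, the only thing to check is that $(x_k-c^2_k)\,\cA_1(x)$ coincides with $H_k(xx^\top,x)$: the entries of $(x_k-c^2_k)\,\cA_1(x)$ that are genuinely quadratic in $x$ are the last-row/column entries $(x_k-c^2_k)(x_j-c^1_j)$, whose degree-two part $x_kx_j$ equals $(xx^\top)_{kj}$, i.e.\ exactly the term that the definition of $H_k$ replaces by $X_{kj}$; the remaining entries $(x_k-c^2_k)r_1$ are linear in $x$ and are untouched by that substitution. This block-by-block comparison is the only genuine computation.

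Second, I would make explicit why the substitution underlying $\cK$ is compatible with combinations. Each entry of $\cA_2(x)\otimes\cA_1(x)$ is a product of two affine functions of $x$ (an entry of $\cA_2(x)$ times an entry of $\cA_1(x)$), so its degree-two part is a quadratic form $x^\top Mx=\ip{M,xx^\top}$ for a fixed symmetric $M$; replacing $x_kx_\ell$ by $X_{k\ell}$ turns it into $\ip{M,X}$, which is linear in $X$. Hence $\cK(X,x)$ is the sum of a linear function of $X$, a linear function of $x$, and a constant matrix, and by the first step its value at $(xx^\top,x)$ is $\cA_2(x)\otimes\cA_1(x)$. Substituting $X=\sum_i\alpha_i x_ix_i^\top$ and $x=\sum_i\alpha_i x_i$ and pulling the sum through the linear $X$-dependence, the linear $x$-dependence, and the bilinearity of $\otimes$ then produces $\sum_i\alpha_i\,\cA_2(x_i)\otimes\cA_1(x_i)$ --- equivalently, this is just the affine map $\cK$ distributed over the combination $\sum_i\alpha_i(x_ix_i^\top,x_i)$ together with the pointwise identity of the first step. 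I expect no real obstacle: all the content lies in the block-by-block verification of the pointwise identity, and the rest is formal manipulation of the (bi)linear maps involved.
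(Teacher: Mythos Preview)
Your approach is exactly the paper's: the paper gives no separate proof and simply records, in the sentence immediately preceding the lemma, that $\cK(X,x)$ is an affine map of $(X,x)$ and lets the result follow. Your proposal fleshes out the pointwise identity $\cK(xx^\top,x)=\cA_2(x)\otimes\cA_1(x)$, which the paper treats as definitional, but the core logic is identical.

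That said, there is a genuine gap, and it is shared by your argument and the paper's one-line justification: an \emph{affine} map does not distribute over an arbitrary combination $\sum_i\alpha_i(\,\cdot\,)$, only over combinations with $\sum_i\alpha_i=1$. Since $\cK$ has a nonzero constant part (for instance the diagonal entries $r_1r_2$), the identity as stated fails whenever $\sum_i\alpha_i\neq 1$; concretely, with $r=1$, $x_1=0$, $\alpha_1=2$ one gets $\cK(0,0)$ on the left but $2\,\cA_2(0)\otimes\cA_1(0)=2\,\cK(0,0)$ on the right. In your second step, ``pulling the sum through the linear $X$-dependence, the linear $x$-dependence'' is fine, but the constant term does not pull through, and invoking ``bilinearity of $\otimes$'' does not help since $\cA_1,\cA_2$ are affine rather than linear in $x$. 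This does not affect the downstream use in Theorem~\ref{thm:better_than_kron}, because there the extreme ray $\bar Z$ can be normalised so that $\bar Z_{n+2,n+2}=\sum_i\alpha_i=1$; but the lemma (and hence your proof of it) needs that extra hypothesis to be correct.
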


    Based on the characterization of rank-one and non-rank-one extreme rays of $\cC_2$ given in Proposition \ref{prop:rank_one_rays} and Theorem \ref{thm:non_rank_one_extray}, we conclude that to finish the proof of Theorem~\ref{thm:main_refined}, it suffices to prove the following result.
    \begin{theorem}\label{thm:better_than_kron}
        Let $\Zb$ span an extreme ray of $\cC_2$ and suppose $r:=\rank(\Zb)>1$. Define $\bar{X}:=\pi_X(\Zb),\bar{x}:=\pi_x(\Zb)$. Then, $\cK(\bar{X},\bar{x})\succeq 0$. 
    \end{theorem}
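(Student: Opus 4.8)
The plan is to combine the decomposition of Theorem~\ref{thm:non_rank_one_extray} with Lemma~\ref{lem:kron_equivalence}, and then squeeze out the remaining information from the single active SOC-RLT constraint. First I would apply Theorem~\ref{thm:non_rank_one_extray} to write $\Zb=\sum_{i=1}^r\alpha_i w^i(w^i)^\top$ with $\alpha_i>0$ and $w^i=(x^i,\ \|x^i\|_2^2,\ 1)$, where $x^1$ lies strictly inside both balls, $\|x^i-c^1\|_2=r_1$ for $i\ge 2$, and $P\Zb d^2\in\bd(\L^{n+2})$. Then $\bar X=\sum_i\alpha_i x^i(x^i)^\top$, $\bar x=\sum_i\alpha_i x^i$, so by Lemma~\ref{lem:kron_equivalence},
\[
\cK(\bar X,\bar x)=\sum_{i=1}^r\alpha_i\,\cA_2(x^i)\otimes\cA_1(x^i),
\]
and it suffices to prove this matrix is PSD. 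Since $\|x^i-c^1\|_2\le r_1$ for every $i$, each $\cA_1(x^i)\succeq 0$, with $\cA_1(x^1)\succ 0$ and $\cA_1(x^i)$ of rank $n$ for $i\ge 2$; since $\|x^1-c^2\|_2<r_2$, also $\cA_2(x^1)\succ 0$, so the term $i=1$ is positive definite. The only obstruction is the index set $\cN:=\{i\ge 2:\ \|x^i-c^2\|_2>r_2\}$, on which the arrow matrix $\cA_2(x^i)$ has exactly one negative eigenvalue, i.e.\ $\cA_2(x^i)=\cA_2(x^i)^{+}-\mu_i\,g^i(g^i)^\top$ with $\cA_2(x^i)^{+}\succeq 0$, $\mu_i=\|x^i-c^2\|_2-r_2>0$, and $g^i$ an explicit unit eigenvector. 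Dropping nonnegative terms and writing $\cA_1(x^i)=L_iL_i^\top$ with $L_i\in\R^{(n+1)\times n}$ for $i\ge 2$, this gives
\[
\cK(\bar X,\bar x)\ \succeq\ \alpha_1\,\cA_2(x^1)\otimes\cA_1(x^1)\ -\ \sum_{i\in\cN}\alpha_i\mu_i\,(g^i\otimes L_i)(g^i\otimes L_i)^\top,
\]
so it remains to show the right-hand side is PSD; as the first term is invertible, a Schur-complement argument reduces this to the operator bound $\tfrac1{\alpha_1}\,\Gamma^\top\!\big(\cA_2(x^1)^{-1}\otimes\cA_1(x^1)^{-1}\big)\Gamma\preceq I$, where $\Gamma$ concatenates the blocks $\sqrt{\alpha_i\mu_i}\,(g^i\otimes L_i)$, $i\in\cN$.

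The remaining input is the constraint $P\Zb d^2\in\L^{n+2}$ (which on the extreme ray is in fact $\in\bd(\L^{n+2})$), and I would convert it into scalar relations among the $x^i$. Using $P^\top JP=2Q$ with $J:=\Diag(-I_{n+1},1)$ and $(w^i)^\top Qw^j=\|x^i-x^j\|_2^2$, one has $P\Zb d^2=\sum_i\beta_i\,Pw^i$ with $\beta_i:=\alpha_i(r_2^2-\|x^i-c^2\|_2^2)$ (so $\beta_i<0$ exactly for $i\in\cN$), and $\ip{P\Zb d^2,\ JPw}=2\sum_i\beta_i\|x^i-x\|_2^2$ for every $w=(x,\ \|x\|_2^2,\ 1)$; since each such $Pw$ and $JPw$ lie in $\bd(\L^{n+2})$, membership $P\Zb d^2\in\L^{n+2}$ forces $\sum_i\beta_i\|x^i-x\|_2^2\ge 0$ for all $x\in\R^n$, while $P\Zb d^2\in\bd(\L^{n+2})$ forces $\sum_{i,j}\beta_i\beta_j\|x^i-x^j\|_2^2=0$. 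Moreover $\beta_1>0$, since $(d^1)^\top\Zb d^2>0$ and $\ip{d^1,w^i}=0$ for $i\ge2$ give $(d^1)^\top\Zb d^2=\ip{d^1,w^1}\,\beta_1$. I would then feed these relations, together with the explicit forms of $\cA_1(x^1)^{-1}$, $\cA_2(x^1)^{-1}$, $L_i$, and $g^i$, into the Schur bound above; the point is that a small value of $r_2^2-\|x^1-c^2\|_2^2$ (which inflates the bad terms relative to the $i=1$ term) forces $\alpha_1=\beta_1/(r_2^2-\|x^1-c^2\|_2^2)$ to be correspondingly large, so the positive-definite $i=1$ term always has enough room to absorb the $\cN$-part.

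I expect carrying out this last estimate—in particular controlling the off-diagonal ($i\ne j$) blocks of $\Gamma^\top(\cdots)\Gamma$—to be the main obstacle. As a sanity check and a transparent special case: when $r=2$ the relation $\sum_{i,j}\beta_i\beta_j\|x^i-x^j\|_2^2=0$ reads $2\beta_1\beta_2\|x^1-x^2\|_2^2=0$, and since $\beta_1>0$ and $x^1\ne x^2$ (because $\|x^1-c^1\|_2<r_1=\|x^2-c^1\|_2$) this forces $\beta_2=0$, hence $\cN=\emptyset$ and every term of $\cK(\bar X,\bar x)$ is already PSD; all the substance lies in the case $r\ge 3$.
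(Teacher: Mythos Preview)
Your setup matches the paper exactly through the decomposition of Theorem~\ref{thm:non_rank_one_extray} and the identity $\cK(\bar X,\bar x)=\sum_i\alpha_i\,\cA_2(x^i)\otimes\cA_1(x^i)$ from Lemma~\ref{lem:kron_equivalence}. More importantly, your derivation that $P\Zb d^2\in\L^{n+2}$ forces
\[
\sum_{i=1}^r\beta_i\,\|x^i-x\|_2^2\ \ge\ 0\qquad\text{for all }x\in\R^n,
\qquad \beta_i:=\alpha_i\bigl(r_2^2-\|x^i-c^2\|_2^2\bigr),
\]
via the self-duality of $\L^{n+2}$ and the identity $(Pw^i)^\top JPw=2\|x^i-x\|_2^2$, is correct and is precisely the paper's inequality~\eqref{eq:implied} (which the paper obtains by building the matrix $M$ and arguing $M\succeq0$ via Schur complement). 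So you have the key scalar relation in hand.

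The gap is in how you propose to use it. Your route---spectrally split each $\cA_2(x^i)$ for $i\in\cN$, drop all PSD contributions from $i\ge2$, and then try to absorb $\sum_{i\in\cN}\alpha_i\mu_i(g^i\otimes L_i)(g^i\otimes L_i)^\top$ into the single positive-definite term $\alpha_1\,\cA_2(x^1)\otimes\cA_1(x^1)$---is not completed, and as stated is likely too lossy: the inequality $\sum_i\beta_i\|x^i-x\|_2^2\ge0$ may well rely on cancellation between the negative $\beta_i$'s ($i\in\cN$) and positive $\beta_i$'s with $i\ge2$, which you have discarded. There is no reason the $i=1$ term alone should dominate $\cN$. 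The paper avoids this entirely via Lemma~\ref{lem:compare_kron}, which gives, for every $x$ with $\|x-c^1\|_2=r_1$ or with $\|x-c^1\|_2<r_1,\ \|x-c^2\|_2<r_2$, the pointwise lower bound
\[
\cV^\top\bigl(\cA_2(x)\otimes\cA_1(x)\bigr)\cV\ \ge\ \tfrac{1}{r_1r_2}\bigl(r_2^2-\|x-c^2\|_2^2\bigr)\,\|r_1v^{n+1}+b_{n+1}(x-c^1)\|_2^2.
\]
Summing over $i$ (all $x^i$ satisfy one of the two hypotheses) reduces $\cV^\top\cK(\bar X,\bar x)\cV\ge0$ to $\sum_i\beta_i\|r_1v^{n+1}+b_{n+1}(x^i-c^1)\|_2^2\ge0$, which is exactly your inequality with $x=c^1-\tfrac{r_1}{b_{n+1}}v^{n+1}$ (and trivial when $b_{n+1}=0$). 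So the missing idea is Lemma~\ref{lem:compare_kron}: it collapses the $(n+1)^2$-dimensional test vector $\cV$ to its last block $(v^{n+1},b_{n+1})$, after which your scalar relation finishes the proof in one line, with no Schur-complement bookkeeping and no terms dropped.
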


    The proof of Theorem~\ref{thm:better_than_kron} is mostly algebraic. We will use the following lemma to simplify our computations. 
    \begin{lemma}\label{lem:compare_kron}
        For any $\cV\in\R^{(n+1)^2}$ we use the notation
        \[\cV^{\top}:=\left(
            (v^1)^{\top},  b_1,  \ldots , (v^{n+1})^{\top} , b_{n+1}
        \right),\]
        where $v^j\in\R^n,~b_j\in \R$ for all $j\in [n+1]$. 
        For any $x\in \R^n$ such that either $\|x-c^1\|_2=r_1$ or $\|x-c^1\|_2<r_1$ and $\|x-c^2\|_2<r_2$, we have 
        \[
        \cV^{\top}\left(\cA_2(x)\otimes \cA_1(x)\right)\cV 
        \ge \frac{1}{r_1r_2} \left(r_2^2-\|x-c^2\|_2^2 \right) \left\|r_1v^{n+1}+b_{n+1}(x-c^1)\right\|_2^2.
    \]
    \end{lemma}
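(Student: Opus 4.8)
The plan is to analyze the quadratic form $\cV^\top(\cA_2(x)\otimes \cA_1(x))\cV$ directly by exploiting the block structure of the Kronecker product. Writing $\cA_2(x)=[a_{kl}]_{k,l\in[n+1]}$ with $a_{kl}$ the entries of the arrow matrix for the second ball, and writing $u^j := v^j$ for $j\in[n]$ together with $u^{n+1}$ paired with the scalars $b_j$, the Kronecker product acts block-wise so that
\[
\cV^\top(\cA_2(x)\otimes \cA_1(x))\cV = \sum_{k,l\in[n+1]} a_{kl}\, (\hat v^k)^\top \cA_1(x)\, \hat v^l,
\]
where $\hat v^k := \big((v^k_1,\dots), b_k\big)\in\R^{n+1}$ collects the $n$-vector part and the matching scalar. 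Since $\cA_1(x)\succeq 0$ on the relevant domain (either $\|x-c^1\|_2 = r_1$, in which case $\cA_1(x)$ is PSD of rank $n$, or $\|x-c^1\|_2<r_1$, in which case $\cA_1(x)\succ 0$), I would factor $\cA_1(x) = L^\top L$ (allowing $L$ rank-deficient in the boundary case) and set $p^k := L\hat v^k$. Then the expression becomes $\sum_{k,l} a_{kl}\,(p^k)^\top p^l = \langle \cA_2(x)\otimes I,\ P^\top P\rangle$ where $P$ has columns $p^k$; equivalently this equals $\sum_i (\text{row }i\text{ of }P)^\top \cA_2(x)\,(\text{row }i\text{ of }P)$. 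The point of this rewriting is that I now only need the scalar inequality $\xi^\top \cA_2(x)\,\xi \ge \frac{1}{r_2}(r_2^2-\|x-c^2\|_2^2)\,\xi_{n+1}^2$ for $\xi\in\R^{n+1}$, which is an easy one-ball computation: on the arrow matrix, $\xi^\top\cA_2(x)\xi = r_2\|\xi_{1:n}\|_2^2 + 2\xi_{n+1}\langle x-c^2,\xi_{1:n}\rangle + r_2\xi_{n+1}^2$, and completing the square in $\xi_{1:n}$ gives exactly $r_2\|\xi_{1:n} + \frac{\xi_{n+1}}{r_2}(x-c^2)\|_2^2 + \frac{1}{r_2}(r_2^2-\|x-c^2\|_2^2)\xi_{n+1}^2$, so dropping the first square yields the bound.

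Summing the scalar bound over the rows $i$ of $P$ gives
\[
\cV^\top(\cA_2(x)\otimes \cA_1(x))\cV \ \ge\ \frac{1}{r_2}\big(r_2^2-\|x-c^2\|_2^2\big)\sum_i \big((\text{row }i\text{ of }P)_{n+1}\big)^2 \ =\ \frac{1}{r_2}\big(r_2^2-\|x-c^2\|_2^2\big)\,\|p^{n+1}\|_2^2,
\]
since the $(n+1)$-st entry of row $i$ of $P$ is the $i$-th entry of $p^{n+1}=L\hat v^{n+1}$. It remains to identify $\|p^{n+1}\|_2^2 = (\hat v^{n+1})^\top \cA_1(x)\,\hat v^{n+1}$ with the right-hand side of the claimed inequality: writing $\hat v^{n+1}=(v^{n+1},b_{n+1})$ and using the arrow-matrix identity once more, $(\hat v^{n+1})^\top \cA_1(x)\hat v^{n+1} = r_1\|v^{n+1}\|_2^2 + 2b_{n+1}\langle x-c^1,v^{n+1}\rangle + r_1 b_{n+1}^2$; completing the square the other way, this equals $\frac{1}{r_1}\|r_1 v^{n+1}+b_{n+1}(x-c^1)\|_2^2 + \frac{1}{r_1}(r_1^2-\|x-c^1\|_2^2)b_{n+1}^2$. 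On the boundary case $\|x-c^1\|_2=r_1$ the second term vanishes and we get exactly $\frac{1}{r_1}\|r_1 v^{n+1}+b_{n+1}(x-c^1)\|_2^2$, yielding the lemma with equality in that step; in the interior case the second term is nonnegative, so we may drop it and the inequality still holds (here also $r_2^2-\|x-c^2\|_2^2>0$, so the sign works out). Combining with the displayed bound gives the factor $\frac{1}{r_1 r_2}(r_2^2-\|x-c^2\|_2^2)\|r_1 v^{n+1}+b_{n+1}(x-c^1)\|_2^2$ as desired.

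The main obstacle I anticipate is purely bookkeeping: correctly matching the index conventions of the Kronecker product $\cA_2(x)\otimes\cA_1(x)$ with the partition of $\cV$ into the blocks $(v^j,b_j)$, and making sure the "$(n+1)$-st slot" of the outer arrow matrix is the one that survives — it is precisely the homogenization coordinate, which is why $v^{n+1}$ and $b_{n+1}$ appear in the bound and not some other block. The two sign/positivity facts that must be checked to close the argument are (i) $\cA_1(x)\succeq 0$ on the stated domain so that the factorization $L^\top L$ exists and dropping squares is legitimate, and (ii) on the interior branch, $r_2^2-\|x-c^2\|_2^2\ge 0$ so that discarding the nonnegative leftover term $\frac{1}{r_1}(r_1^2-\|x-c^1\|_2^2)b_{n+1}^2$ only weakens the bound in the correct direction; both are immediate from the hypotheses on $x$. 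No heavy machinery is needed beyond Fact~\ref{fact:Q-P_relation}-style arrow-matrix algebra.
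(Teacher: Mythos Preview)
Your argument is correct. Both your proof and the paper's proceed by completing the square once for each arrow matrix, and the case split (boundary of the first ball versus interior of both balls) is identical. The difference is organizational: the paper expands the bilinear form $(\hat v^i)^\top\cA_1(x)\hat v^j$ explicitly as $\frac{1}{r_1}\big(\langle r_1v^i+b_i(x-c^1),\,r_1v^j+b_j(x-c^1)\rangle + b_ib_j(r_1^2-\|x-c^1\|_2^2)\big)$ and then groups terms by hand using the arrow structure of $\cA_2(x)$, whereas you factor $\cA_1(x)=L^\top L$ up front, which collapses the double sum to $\sum_i (\text{row}_i P)^\top \cA_2(x)(\text{row}_i P)$ and reduces everything to the scalar one-ball inequality $\xi^\top\cA_2(x)\xi\ge \frac{1}{r_2}(r_2^2-\|x-c^2\|_2^2)\xi_{n+1}^2$. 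Your route is a bit cleaner and makes the two completions of the square symmetric and modular; the paper's route is more explicit and produces, as a byproduct, the exact residual terms (the $\|r_2(r_1v^i+\cdots)+(x_i-c^2_i)(r_1v^{n+1}+\cdots)\|_2^2$ squares) that are discarded. Neither approach needs anything beyond elementary arrow-matrix algebra.
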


    We defer the proof of this lemma to appendix.

        \begin{proof}[Proof of Theorem \ref{thm:better_than_kron}]
        Based on Theorem~\ref{thm:non_rank_one_extray}, we can write 
       \[
        \Zb=\sum_{i=1}^r \alpha_i\begin{bmatrix}
            x^i\\ \|x^i\|_2^2 \\1
        \end{bmatrix}\begin{bmatrix}
            x^i\\ \|x^i\|_2^2 \\1
        \end{bmatrix}^{\top},
        \]
        where $\alpha_i>0$ for all $i\in[r]$, $\|x^1-c^1\|_2<r_1,~\|x^1-c^2\|_2<r_2$, $\|x^i-c^1\|_2=r_1$ for all $2\le i\le r$, and $P\Zb d^2\in \bd (\L^{n+2})$.  
        
        In order to complete the proof, we will show that $\cV^\top \cK(\bar{X},\bar{x})\cV\ge 0$ holds for any $\cV\in\R^{(n+1)^2}$. Let us write $\cV\in\R^{(n+1)^2}$ as $\cV^{\top}=\left(
            (v^1)^{\top},  b_1,  \ldots , (v^{n+1})^{\top} , b_{n+1}
        \right)$, where $v^j\in\R^n,b_j\in \R$ for all $j\in[n+1]$. 
        Then, by Lemmas~\ref{lem:kron_equivalence}~and~\ref{lem:compare_kron}, it suffices to prove that for any $i\in[r]$ we have
        \[
            \frac{1}{r_1r_2}\sum_{i=1}^r \alpha_i  \left(r_2^2-\|x^i-c^2\|_2^2 \right) \left\|r_1v^{n+1}+b_{n+1}(x^i-c^1)\right\|_2^2    \ge 0. 
        \]
        Let us define $\eta_i:= r_2^2-\|x^i-c^2\|_2^2$ for all $i\in[r]$. Thus, we need to show
        \begin{equation}\label{eq:desired}
            \frac{1}{r_1r_2}\sum_{i=1}^r \alpha_i  \eta_i \left\|r_1v^{n+1}+b_{n+1}(x^i-c^1)\right\|_2^2    \ge 0. 
        \end{equation}
        
        Note that for all $i\in[r]$ we have
           \begin{align*}
               P\begin{bmatrix}
            x^i\\ \|x^i\|_2^2 \\1
        \end{bmatrix}\begin{bmatrix}
            x^i\\ \|x^i\|_2^2 \\1
            \end{bmatrix}^{\top} d^2
            & = \begin{bmatrix}2I & 0 & 0\\ 0 & 1 & -1\\0 & 1 & 1
\end{bmatrix} 
             \begin{bmatrix}
            x^i\\ \|x^i\|_2^2 \\1
        \end{bmatrix}\begin{bmatrix}
            x^i\\ \|x^i\|_2^2 \\1
            \end{bmatrix}^{\top}
            \begin{bmatrix}2c^2\\-1\\r_2^2-\|c^2\|_2^2\end{bmatrix} \\
            &=\begin{bmatrix}
            2x^i\\ \|x^i\|_2^2 -1 \\ \|x^i\|_2^2 +1
            \end{bmatrix}
            \left(2(c^2)^\top x^i- \|x^i\|_2^2 + r_2^2-\|c^2\|_2^2\right) \\
            &=\left(r_2^2-\|x^i-c^2\|_2^2\right) 
            \begin{bmatrix}
            2x^i\\ \|x^i\|_2^2 -1 \\ \|x^i\|_2^2 +1
            \end{bmatrix}
            = \eta_i \begin{bmatrix}
            2x^i\\ \|x^i\|_2^2 -1 \\ \|x^i\|_2^2 +1
            \end{bmatrix}.
            \end{align*}
            Hence, 
            \begin{align*} 
                P\Zb d^2
             &=\sum_{i=1}^r \alpha_i \eta_i \begin{bmatrix}
            2x^i\\ \|x^i\|_2^2 -1 \\ \|x^i\|_2^2 +1
            \end{bmatrix} 
                = \begin{bmatrix}
            2 \sum_{i=1}^r \alpha_i \eta_i x^i\\ 
            \left(\sum_{i=1}^r \alpha_i \eta_i  \|x^i\|_2^2 \right) - \left(\sum_{i=1}^r \alpha_i \eta_i \right) \\ 
             \left(\sum_{i=1}^r \alpha_i \eta_i  \|x^i\|_2^2 \right) + \left(\sum_{i=1}^r \alpha_i \eta_i \right) 
            \end{bmatrix}.
            \end{align*}
            Thus, using $P\Zb d^2\in \L^{n+2}$ we deduce that 
            \begin{align}
            & \sum_{i=1}^r \alpha_i \eta_i  + \sum_{i=1}^r \alpha_i \eta_i \|x^i\|_2^2 = \sum_{i=1}^r \left(\alpha_i \eta_i  \left( \|x^i\|_2^2 + 1 \right) \right) \ge 0 \label{eq:PZbd^2-equiv1}, \\
        \text{and }~~&\left(\sum_{i=1}^r \alpha_i \eta_i \right)\left(\sum_{i=1}^r \alpha_i \eta_i \|x^i\|_2^2 \right) \ge \left\|\sum_{i=1}^r \alpha_i \eta_i x^i\right\|_2^2. \label{eq:PZbd^2-equiv2}
        	    \end{align}
	    Therefore, we must have $\sum_{i=1}^r \alpha_i \eta_i \ge0$  and $\sum_{i=1}^r \alpha_i \eta_i \|x^i\|_2^2 \ge0$  
	since by \eqref{eq:PZbd^2-equiv1} the sum of these terms is nonnegative and by \eqref{eq:PZbd^2-equiv2} their product is nonnegative.
    
	We define and examine the following $(n+1) \times (n+1)$ matrix
        \[
        M:=\begin{bmatrix}
            \left(\sum_{i=1}^r \alpha_i \eta_i \right)I & \sum_{i=1}^r \alpha_i \eta_i x^i  \\
            \sum_{i=1}^r \alpha_i \eta_i (x^i)^{\top} &  \sum_{i=1}^r \alpha_i \eta_i \|x^i\|_2^2
        \end{bmatrix} .
        \]
        We claim that $M\succeq0$ as a result of \eqref{eq:PZbd^2-equiv1} and \eqref{eq:PZbd^2-equiv2}. First, consider the case when at least one of the terms $\sum_{i=1}^r \alpha_i \eta_i$ and $\sum_{i=1}^r \alpha_i \eta_i \|x^i\|_2^2$ is equal to zero. Then, by \eqref{eq:PZbd^2-equiv2}, we deduce that $\sum_{i=1}^r \alpha_i \eta_i x^i = 0$ and from \eqref{eq:PZbd^2-equiv1} we conclude that the other term must be nonnegative and so $M\succeq0$ holds in this case.
        Now, consider the case when both $\sum_{i=1}^r \alpha_i \eta_i$ and $\sum_{i=1}^r \alpha_i \eta_i \|x^i\|_2^2$ are nonzero. Since both terms are always nonnegative, in this case they both must  be positive. Then, both of the diagonal block matrices in $M$ are positive definite, and by Schur Complement Lemma $M\succeq0$ if and only if 
        \begin{align*} 
        &\sum_{i=1}^r \alpha_i \eta_i \|x^i\|_2^2 -  \left(\sum_{i=1}^r \alpha_i \eta_i \right)^{-1} \underbrace{\left( \sum_{i=1}^r \alpha_i \eta_i x^i \right)^{\top}  I \left( \sum_{i=1}^r \alpha_i \eta_i x^i \right)}_{=\norm{\sum_{i=1}^r \alpha_i \eta_i x^i}_2^2} \ge 0 .
        \end{align*}
        By recalling that $\sum_{i=1}^r \alpha_i \eta_i>0$ and rearranging the above inequality, we observe that it is precisely the same as \eqref{eq:PZbd^2-equiv2}. Thus, we conclude that $M\succeq0$ holds in this case as well.
	    
	Now, as $M\succeq0$, for any $v\in\R^n$ we have
	\begin{align} \label{eq:implied}
	0 \le \begin{bmatrix} v\\1\end{bmatrix}^{\top}M\begin{bmatrix} v\\1\end{bmatrix}
	= \sum_{i=1}^r \alpha_i \eta_i  \|v+x^i\|_2^2.
	\end{align}	
	Finally, note that whenever $b_{n+1}\ne 0$ we have $\|r_1v^{n+1}+b_{n+1}(x^i-c^1)\|_2^2=b_{n+1}^2\|\frac{r_1v^{n+1}}{b_{n+1}}-c^1+x^i\|_2^2$, hence \eqref{eq:implied} implies the desired relation of \eqref{eq:desired}. The case when $b_{n+1}=0$ follows from continuity letting $b_{n+1}\to 0$. This completes the proof. 
       \end{proof}

\subsection{Domination of Zhen et al RLT inequalities}

In this section we examine an inequality proposed by Zhen et al. in \cite{zhen2021extension} to strengthen the Shor SDP relaxation of the sets of form $S$. Once again, in the numerical study of \cite{burer2024slightly}, it was identified that this inequality is redundant for $\cC_m$, and it was conjectured in \cite{burer2024slightly} that this must be theoretically true as well. 

The main idea of Zhen et al.\ \cite{zhen2021extension}'s inequality is as follows. 
Recall that given two vectors $u\in\R^k$ and $v\in\R^\ell$, we have $\|uv^\top\|_{m2}=\|u\|_2\|v\|_2$, where $\|\cdot\|_{m2}$ denotes the matrix 2-norm. 
Then, given any two SOC constraints $\|x-c^i\|_2\le r_i$ and $\|x-c^j\|_2\le r_j$ with $i\neq j$, we have
\begin{align*}
\|xx^\top-x(c^j)^\top - c^ix^\top +c^i(c^j)^\top\|_{m2} &= \|(x-c^i)(x-c^j)^\top\|_{m2} \\
&= \|x-c^i\|_2 \|x-c^j\|_2 \leq r_i r_j. 
\end{align*}
Now, by defining a matrix variable $X$ to capture $xx^\top$, this inequality can be linearized in the lifted space as 
\begin{align}
&\|X-x(c^j)^\top-c^ix^\top + c^i(c^j)^\top\|_{m2} \leq r_i r_j \label{eq:Zhen} \\
&\iff  
\underbrace{\begin{bmatrix}
r_i^2 I_n & X-x(c^j)^\top-c^ix^\top + c^i(c^j)^\top \\
X-c^jx^\top-x(c^i)^\top + c^j(c^i)^\top & r_j^2 I_n
\end{bmatrix}}_{:=\cZ_{i,j}(X,x)}
\succeq0 . \notag
\end{align}
Note that this inequality~\eqref{eq:Zhen}, i.e., $\cZ_{i,j}(X,x)\succeq0$, is linear in terms of the vector variable $x$ and the matrix variable $X$. Thus, Zhen et al. suggest to add the inequality $\cZ_{i,j}(X,x)\succeq0$ for all $i,j\in[m]$ to strengthen the Shor SDP relaxation of the set $S$.

We next show that the inequality~\eqref{eq:Zhen} follows from adding linear- and SOC-RLT inequalities to the standard Shor relaxation. Thus, the inequalities~\eqref{eq:Zhen} are redundant for the lifted relaxation  $\cC_m$. 
\begin{theorem}\label{thm:Zhen}
For any $m\ge2$ and $i,j\in[m]$ where $i\neq j$ we have the relation
\begin{align*}
\set{Z\in \cC_{\{i,j\}},~ Z_{n+2,n+2}=1 } \implies \cZ_{ij}(\pi_X(Z),\pi_x(Z))\succeq 0.
\end{align*}
\end{theorem}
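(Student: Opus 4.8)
Since $\cZ_{ij}$ involves only the data of balls $i$ and $j$, and $\cC_{\{i,j\}}$ is—after relabeling those two balls as $1,2$—exactly the cone $\cC_2$ of \eqref{eqn:burer_2}, the plan is to prove the statement for $m=2$, $(i,j)=(1,2)$: for every $Z\in\cC_2$ with $Z_{n+2,n+2}=1$ we have $\cZ_{12}(\bar X,\bar x)\succeq0$, where $\bar X:=\pi_X(Z)$ and $\bar x:=\pi_x(Z)$. We may assume $r_1,r_2>0$ (if $r_k=0$ then $S$ is a point or empty and the claim is trivial). I would first record two elementary facts: $\cZ_{12}$ is affine in $(X,x)$, and for symmetric $X$ the two off-diagonal blocks of $\cZ_{12}(X,x)$ are transposes of each other, so by the Schur complement lemma together with the identity $\|uv^\top\|_{m2}=\|u\|_2\|v\|_2$,
\[
\cZ_{12}(X,x)\succeq0 \iff \bigl\|X-x(c^2)^\top-c^1x^\top+c^1(c^2)^\top\bigr\|_{m2}\le r_1r_2 .
\]

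Next, exactly as in the reduction used for Theorem~\ref{thm:main_refined}, I would pass to extreme rays of $\cC_2$. The cone $\cC_2$ is closed, convex and pointed (it sits inside $\S^{n+2}_+$), hence admits the compact base $\{Z\in\cC_2:\tr(Z)=1\}$, so every element of $\cC_2$ is a finite nonnegative combination of its extreme-ray generators (Minkowski--Carathéodory). By Proposition~\ref{prop:rank_one_rays} every rank-one extreme ray of $\cC_2$ has positive $(n+2,n+2)$-entry ($=x_0^2$), and by Theorem~\ref{thm:non_rank_one_extray} so does every higher-rank one ($(n+2,n+2)$-entry $=\sum_i\alpha_i>0$). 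Consequently any $Z\in\cC_2$ with $Z_{n+2,n+2}=1$ can be written as a convex combination $Z=\sum_k\lambda_kZ^k$ with $\lambda_k>0$, $\sum_k\lambda_k=1$, where each $Z^k$ spans an extreme ray of $\cC_2$, rescaled so that $(Z^k)_{n+2,n+2}=1$. Since $\pi$ is linear, $\cZ_{12}$ is affine, $\sum_k\lambda_k=1$, and the PSD cone is convex, it then suffices to establish $\cZ_{12}(\pi_X(Z^k),\pi_x(Z^k))\succeq0$ for each such normalized extreme-ray generator.

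For a rank-one generator $Z^k=ww^\top$, Proposition~\ref{prop:rank_one_rays} (choosing the sign of $w$ so that $w_{n+2}=1$) gives $w=(x,\|x\|_2^2,1)^\top$ with $\|x-c^1\|_2\le r_1$ and $\|x-c^2\|_2\le r_2$; then $\pi_X(Z^k)=xx^\top$, $\pi_x(Z^k)=x$, the matrix in the criterion above is $(x-c^1)(x-c^2)^\top$, whose matrix $2$-norm is $\|x-c^1\|_2\,\|x-c^2\|_2\le r_1r_2$, so $\cZ_{12}(\pi(Z^k))\succeq0$. For a higher-rank generator, Theorem~\ref{thm:non_rank_one_extray} gives $Z^k=\sum_{i=1}^r\alpha_iw^i(w^i)^\top$ with $w^i=(x^i,\|x^i\|_2^2,1)^\top$, $\alpha_i>0$, $\|x^i-c^1\|_2\le r_1$ for every $i$, and $PZ^kd^2\in\L^{n+2}$; the normalization forces $\sum_i\alpha_i=1$. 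A short computation gives $\pi_X(Z^k)=\sum_i\alpha_ix^i(x^i)^\top$ and $\pi_x(Z^k)=\sum_i\alpha_ix^i$, so the matrix in the criterion is $B:=\sum_i\alpha_i(x^i-c^1)(x^i-c^2)^\top$. The membership $PZ^kd^2\in\L^{n+2}$, via the very computation used in the proof of Theorem~\ref{thm:better_than_kron} (which writes $PZ^kd^2$ in terms of $\eta_i:=r_2^2-\|x^i-c^2\|_2^2$ and extracts $\sum_i\alpha_i\eta_i\ge0$), together with $\sum_i\alpha_i=1$ yields $\sum_i\alpha_i\|x^i-c^2\|_2^2\le r_2^2$; and $\|x^i-c^1\|_2\le r_1$ with $\sum_i\alpha_i=1$ gives $\sum_i\alpha_i\|x^i-c^1\|_2^2\le r_1^2$. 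Finally, for unit vectors $u,v$, Cauchy--Schwarz gives
\[
u^\top Bv=\sum_i\alpha_i\bigl(u^\top(x^i-c^1)\bigr)\bigl(v^\top(x^i-c^2)\bigr)\le\Bigl(\sum_i\alpha_i\bigl(u^\top(x^i-c^1)\bigr)^2\Bigr)^{1/2}\Bigl(\sum_i\alpha_i\bigl(v^\top(x^i-c^2)\bigr)^2\Bigr)^{1/2},
\]
and bounding each factor by $\sum_i\alpha_i\|x^i-c^1\|_2^2\le r_1^2$, respectively $\sum_i\alpha_i\|x^i-c^2\|_2^2\le r_2^2$, shows $\|B\|_{m2}\le r_1r_2$, hence $\cZ_{12}(\pi(Z^k))\succeq0$.

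The key realization is that Zhen et al.'s matrix inequality, after the extreme-ray decomposition, collapses to the single bilinear estimate $\|\sum_i\alpha_i(x^i-c^1)(x^i-c^2)^\top\|_{m2}\le r_1r_2$, which is immediate from Cauchy--Schwarz once one has the two weighted second-moment bounds $\sum_i\alpha_i\|x^i-c^1\|_2^2\le r_1^2$ and $\sum_i\alpha_i\|x^i-c^2\|_2^2\le r_2^2$—the first coming from the ball constraints that Theorem~\ref{thm:non_rank_one_extray} places on the $x^i$, and the second from the SOC-RLT constraint $PZ^kd^2\in\L^{n+2}$, precisely as extracted in the proof of Theorem~\ref{thm:better_than_kron}. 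I do not expect a serious obstacle here: the result rides on the same geometry (ROG-ness of $\tilde{\cC}_2$, the decomposition of non-rank-one extreme rays) that powers Theorem~\ref{thm:main_refined}, and the only mildly delicate point is the bookkeeping of the $(n+2,n+2)$-normalization when converting the conic decomposition of $Z$ into a convex combination of extreme-ray generators.
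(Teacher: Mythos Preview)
Your argument is correct, but it takes a genuinely different route from the paper's. The paper does \emph{not} pass to extreme rays of $\cC_2$ at all. Instead it works directly with an arbitrary $Z\in\cC_{\{i,j\}}$ with $Z_{n+2,n+2}=1$: from $Z\succeq0$ it extracts $X\succeq xx^\top$; from $\ip{Q,Z}=0$ it gets $t=\tr(X)$; and from $PZd^i\in\L^{n+2}$ (for each of $i,j$) it reads off $\delta_i\ge0$, i.e., $\tr\bigl(X-x(c^i)^\top-c^ix^\top+c^i(c^i)^\top\bigr)\le r_i^2$. Combining this trace bound with $X-xx^\top\succeq0$ yields $X-x(c^i)^\top-c^ix^\top+c^i(c^i)^\top\preceq r_i^2 I_n$. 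Finally, the paper observes that
\[
\begin{bmatrix}1&1\\1&1\end{bmatrix}\otimes(X-xx^\top)\;+\;\begin{bmatrix}x-c^i\\x-c^j\end{bmatrix}\begin{bmatrix}x-c^i\\x-c^j\end{bmatrix}^\top\;\succeq\;0
\]
has exactly the off-diagonal blocks of $\cZ_{ij}(X,x)$, and its diagonal blocks are dominated (in the PSD order) by $r_i^2 I_n$ and $r_j^2 I_n$; hence $\cZ_{ij}(X,x)\succeq0$.

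The paper's route is more elementary and self-contained: it never invokes Theorem~\ref{thm:non_rank_one_extray} (hence not the ROG machinery of Burer), and in fact it never uses the linear-RLT constraint $(d^i)^\top Zd^j\ge0$, so it actually proves the slightly stronger statement that Zhen et al.'s inequality already follows from the Shor constraints plus the two SOC-RLT constraints alone. Your route, on the other hand, is nicely unified with the proof of Theorem~\ref{thm:main_refined}: the same extreme-ray decomposition does the work, and once you have the two weighted second-moment bounds $\sum_i\alpha_i\|x^i-c^k\|_2^2\le r_k^2$ the conclusion drops out of a one-line Cauchy--Schwarz. Both are valid; the paper's is shorter and uses fewer hypotheses, yours reuses existing infrastructure.
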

\begin{proof}
Consider any $Z\in \cC_{\{i,j\}}$ satisfying $Z_{n+2,n+2}=1$. 
To simplify our notation, let us set $X:=\pi_X(Z)$ and $x:=\pi_x(Z)$.
Thus, by the definition of $Z$, observe that
\[
Z=\begin{bmatrix}
X & Z_{t,x} & x \\
Z_{t,x}^\top & Z_{t,t}  & t \\
x^\top & t & 1
\end{bmatrix}
\]
Now since $Z\succeq0$, we deduce $\begin{bmatrix}
X &  x \\
x^\top & 1
\end{bmatrix}\succeq0$, which by Schur Complement Lemma implies that $X-xx^\top \succeq0$.

Using the definitions of $Q$ and $Z$, we observe that 
\[
QZ = \begin{bmatrix}-2I & 0 & 0\\0 & 0 & 1\\0 & 1 & 0\end{bmatrix} 
\begin{bmatrix}
X & Z_{t,x} & x \\
Z_{t,x}^\top & Z_{t,t}  & t \\
x^\top & t & 1
\end{bmatrix}
= \begin{bmatrix}
-2X & -2Z_{t,x} & -2x \\
x^\top & t  & 1 \\
Z_{t,x}^\top & Z_{t,t}  & t
\end{bmatrix}.
\]
Then, as $\ip{Q,Z}=\tr(QZ)=0$, we deduce that $0=-2\tr(X)+t+t$, i.e., $t=\tr(X)$.

Moreover, by definition of $P$ and $d^i$, we have
\begin{align*}
PZd^i &= \begin{bmatrix}2I & 0 & 0\\ 0 & 1 & -1\\0 & 1 & 1 \end{bmatrix}
\begin{bmatrix}
X & Z_{t,x} & x \\
Z_{t,x}^\top & Z_{t,t}  & t \\
x^\top & t & 1
\end{bmatrix} d^i \\
&= \begin{bmatrix}
2X & 2Z_{t,x} & 2x \\
Z_{t,x}^\top - x^\top & Z_{t,t}-t  & t-1 \\
Z_{t,x}^\top + x^\top & Z_{t,t}+t  & t+1
\end{bmatrix} 
\begin{bmatrix}2c^i\\-1\\r_i^2-\|c^i\|_2^2\end{bmatrix} \\
& = \begin{bmatrix}4Xc^i - 2Z_{t,x} + 2(r_i^2-\|c^i\|_2^2) x \\
2 (Z_{t,x} - x)^\top c^i - (Z_{t,t}-t ) + (t-1)(r_i^2-\|c^i\|_2^2) \\
2 (Z_{t,x} + x)^\top c^i - (Z_{t,t}+t ) + (t+1)(r_i^2-\|c^i\|_2^2)
\end{bmatrix} \\
& = \begin{bmatrix} 4Xc^i - 4xx^\top c^i  - 2Z_{t,x} + 2tx + 2\delta_i x \\
2 Z_{t,x}^\top c^i - Z_{t,t} + t(r_i^2-\|c^i\|_2^2) - \delta_i \\
2 Z_{t,x}^\top c^i - Z_{t,t} + t(r_i^2-\|c^i\|_2^2) +\delta_i
\end{bmatrix}
= \begin{bmatrix} \xi^i + 2 \delta_i x \\
\beta_i - \delta_i \\
\beta_i + \delta_i
\end{bmatrix}
\end{align*}
where in the last two equations we used the definitions $\delta_i := 2 x^\top c^i - t + r_i^2-\|c^i\|_2^2$, $\beta_i :=2 Z_{t,x}^\top c^i - Z_{t,t} + t(r_i^2-\|c^i\|_2^2) $ and $\xi^i := 4Xc^i - 4xx^\top c^i  - 2Z_{t,x} + 2tx $.
Since $PZd^i \in\L^{n+2}$, we must thus have $\beta_i + \delta_i \ge0$ and 
\[
\|\xi^i + 2 \delta_i x \|_2^2 + (\beta_i-\delta_i)^2 \le (\beta_i+\delta_i)^2  \quad \iff\quad
\|\xi^i + 2 \delta_i x\|_2^2 \le 4\beta_i \delta_i .
\]
Because we have both $\beta_i + \delta_i \ge0$ and $\beta_i  \delta_i \ge0$, we conclude that both $\beta_i$ and $\delta_i$ must be nonnegative.

Note that $\delta_i\ge0$ is equivalent to
\[
0 \leq \delta_i = - t + 2 x^\top c^i  + r_i^2-\|c^i\|_2^2 
= - \tr(X) + 2 x^\top c^i  + r_i^2-\|c^i\|_2^2.
\]
Thus, we arrive at
\begin{equation}\label{eq:impliedLMI_s}
    \tr(X-x(c^i)^\top-c^ix^\top + c^i(c^i)^\top) \leq r_i^2.
\end{equation}
In fact, \eqref{eq:impliedLMI_s} can be obtained in much simpler way by noting that squaring both sides of $\|x-c_i\|_2\le r_i$ results in $(x-c^i)^\top(x-c^i)=x^\top x - (c^i)^\top x - x^\top c^i + (c^i)^\top c^i\leq r_i^2$ and then writing the nonlinear term $x^\top x$ as $\tr(X)$ in the lifted space. 
Typically \eqref{eq:impliedLMI_s} is added to the Shor SDP relaxation as the most basic constraint to capture the underlying quadratic. We just showed that it is implied by SOC-RLT constraint and $X\succeq xx^\top$ requirement.

Moreover, from $0\preceq (X-xx^\top) + (x-c^i)(x-c^i)^\top = X-x(c^i)^\top-c^ix^\top + c^i(c^i)^\top$ and \eqref{eq:impliedLMI_s} we deduce that
\begin{equation}\label{eq:impliedLMI_w}
    X-x(c^i)^\top-c^ix^\top + c^i(c^i)^\top \preceq r_i^2 I_n.
\end{equation}

In addition, we always have
\begin{align*}
    0\preceq &\begin{bmatrix} x-c^i \\ x-c^j \end{bmatrix}
    \begin{bmatrix} x-c^i \\ x-c^j \end{bmatrix}^\top \\
    &= \begin{bmatrix} xx^\top - x(c^i)^\top - c^ix^\top +c^i(c^i)^\top & xx^\top - x(c^j)^\top - c^ix^\top +c^i(c^j)^\top \\
    xx^\top - x(c^i)^\top - c^jx^\top +c^j(c^i)^\top & xx^\top - x(c^j)^\top - c^jx^\top +c^j(c^j)^\top
    \end{bmatrix}.
\end{align*}
As $xx^\top \preceq X$ and $0 \preceq \begin{bmatrix} 1 & 1 \\ 1& 1\end{bmatrix}$ and Kronecker product of two PSD matrices is PSD as well, we also have
\[
0 \preceq \begin{bmatrix} 1 & 1 \\ 1& 1\end{bmatrix} \otimes (X-xx^\top) = \begin{bmatrix} X-xx^\top & X-xx^\top \\ X-xx^\top & X-xx^\top \end{bmatrix}.
\]
Thus, summing up these two inequalities, we conclude that \begin{align}\label{eq:impliedPSD}
    0\preceq \begin{bmatrix} X - x(c^i)^\top - c^ix^\top +c^i(c^i)^\top & X - x(c^j)^\top - c^ix^\top +c^i(c^j)^\top \\
    X - x(c^i)^\top - c^jx^\top +c^j(c^i)^\top & X - x(c^j)^\top - c^jx^\top +c^j(c^j)^\top
    \end{bmatrix}.
\end{align}
Note that the diagonal blocks in the matrix in \eqref{eq:impliedPSD} are of form $X - x(c^i)^\top - c^ix^\top +c^i(c^i)^\top$, and by \eqref{eq:impliedLMI_w}  these block matrices are dominated in the PSD sense by $r_i^2 I_n$. Therefore, we conclude that inequality~\eqref{eq:Zhen} is satisfied whenever $Z\in\cC_{\{i,j\}}$ and  $Z_{n+2,n+2}=1$.
\end{proof}

\section{The moment-SOS viewpoint}
Here we describe another way to derive $\cC_m$ as a particular case of moment-SOS hierarchy. 

The moment-SOS hierarchy, introduced by Lasserre \cite{lasserre2001global} and Parrilo \cite{parrilo2003semidefinite}, is a well known and studied systematic way of obtaining semidefinite relaxations of increasing tightness and size. See also \cite{laurent2009sums} for a comprehensive introduction. 

Here we briefly describe the idea for the moment hierarchy. For a ground set $\widehat{S}:=\{x:~ g_i(x)\ge 0,\,i\in[k]\}$ defined by $k$ polynomial inequalities $ g_i(x)\ge 0$, $i\in[k]$, the variable in the moment hierarchy is a linear functional on some finite dimensional subspace of polynomials, usually polynomials up to a fixed degree $2d$. Computationally, this linear functional is encoded by the so-called \emph{pseudomoments}, which are the values of this linear functional on a basis of the subspace of polynomials, usually chosen to be the monomial basis. These pseudomoments on $\widehat{S}$ are required to share certain properties of true moments on $\widehat{S}$, which send $f$ to $\int f\, d\mu$ for some nonnegative measure $\mu$ on $\widehat{S}$. Namely, the pseudoments of provably nonnegative polynomials should be nonnegative, which include global sum-of-squares, product of constraint $g_i$ with sum-of-squares (Putinar type), and product of the constraints $g_ig_j$ with possibly other constraints or sum-of-squares (Schm{\"u}dgen type). The pseudomoments can be conveniently arranged into the form of a pseudomoment matrix, whose rows and columns are indexed by polynomials $f_1,\ldots,f_N$, and the $(f_i,f_j)$ entry of the matrix is the pseudomoment of $f_if_j$. 

Going back to our setting, the constraints $g_i(x)\ge 0$ are the ball constraints $r_i^2-\|x-c^i\|_2^2 \ge 0$. The moment matrix will have size $(n+2)\times (n+2)$, indexed by monomials of degree at most 1, plus one more
term $\|x\|_2^2=\sum_i x_i^2$. Same as the usual moment hierarchy, the $(f_i,f_j)$ entry of the pseudomoment matrix models the pseudoment $\cM(f_if_j)$. Specifically, we write our matrix variable $Z$ as: 
\[
Z=\begin{bmatrix}
    \cM(x_1^2) & \ldots & \cM(x_1 x_n) &  \cM(x_1\sum_i x_i^2) & \cM(x_1)\\
     \vdots & \ddots & \vdots & \vdots & \vdots\\
     \cM(x_1 x_n) & \ldots & \cM(x_n^2) & \cM(x_n\sum_i x_i^2) & \cM(x_n)  \\
     \cM(x_1\sum_i x_i^2) & \ldots & \cM(x_n\sum_i x_i^2) & \cM((\sum_i x_i^2)^2) & \cM(\sum_i x_i^2)\\
     \cM(x_1) & \ldots & \cM(x_n) & \cM(\sum_i x_i^2) & \cM(1)
\end{bmatrix}
\]

Now we derive constraints on $Z$ as a pseudomoment matrix as follows: 

\begin{enumerate}
    \item $Z\succeq 0$, as $\cM(\sigma^2)\ge 0$ for all $\sigma$ that is a linear combination of $1,x_1,\ldots,x_n,\sum_i x_i^2$;
    \item $\ip{Q,Z}=0$, as by linearity $\sum_i \cM(x_i^2)=\cM(\sum x_i^2)$;
    \item $(d^i)^\top Z d^j\ge 0$ for all $i,j\in[m]$, as $\cM(g_i(x)g_j(x))\ge 0$; 
    \item $PZd^i\in \L^{n+2}$ follows from that $\cM(g_i(x)\sigma^2)\ge 0$ for $\sigma$ of the form $ax_j+b$, $a,b\in \mathbb{R},j\in [m]$. This is equivalent to $\begin{bmatrix}
    \cM(x_j^2 g_i(x)) & \cM(x_j g_i(x))\\
    \cM(x_j g_i(x)) &\cM(g_i(x))
\end{bmatrix}\succeq 0$ for all $j\in [m]$, which then implies

\[
P\begin{bmatrix}
    \cM(x_1 g_i(x))\\ \vdots \\ \cM(x_n g_i(x)) \\ \cM(g_i(x)\sum_j x_j^2) \\ \cM(g_i(x))
\end{bmatrix}\in \L^{n+2},
\]

and this is exactly $PZd^i\in \L^{n+2}$. 
\end{enumerate}

Thus we have arrived at Burer's lifted formulation in \cite{burer2024slightly} using moment hierarchy. This also shows that the lifted formulation is implied by the full second-level moment hierarchy.

\section{Conclusion and further remarks}

In this paper, we show that Burer's lifted convex relaxation for $S$ is provably tighter than the Kronecker RLT inequalities as well as Zhen et al.'s RLT inequalities. 

From a more theoretical perspective, we show that Burer's lifted relaxation can be interpreted as a particular case of moment-SOS hierarchy (to be more precise, a relaxation of the second-level of moment-SOS hierarchy). The first-level of the moment-SOS hierarchy is known to be the same as the Shor relaxation. While the usual setup of moment hierarchy is known to be computationally inefficient even for the second level, Burer's lifted convex relaxation increases the matrix size by only one compared to the first level hierarchy, and yet seemingly retains most of the strength of the second-level hierarchy, as suggested by results of this paper and \cite{burer2024slightly}. In general, it will be interesting to study the relationship between various RLT type inequalities and the moment-SOS hierarchy. 

\section*{Statements and Declarations}
The authors declare that they have no competing interests.

\section*{Acknowledgments}
This research is supported in part by AFOSR grant FA95502210365 and UKRI Horizon Europe underwrite EP/X032051/1. 

\bibliographystyle{plainnat} 
\bibliography{bib}

\appendix

\section{Proof of Lemma~\ref{lem:compare_kron}}
     
        For any $i,j\in [n+1]$ we have
        \begin{align*}
            & \begin{bmatrix}
            v^i \\ b_i
        \end{bmatrix}^{\top}\cA_1(x)\begin{bmatrix}
            v^j \\ b_j
        \end{bmatrix}\\
        & = r_1 \ip{v^i,v^j}+b_i\ip{v^j,x-c^1}+b_j\ip{v^i,x-c^1}+r_1b_ib_j\\
        & = \frac{1}{r_1}\left( \ip{r_1 v^i+b_i(x-c^1),\, r_1v^j+b_j(x-c^1)} +b_ib_j( r_1^2-\|x-c^1\|_2^2)\right).
        \end{align*}

        If $\|x-c^1\|_2=r_1$, then we have
        \begin{align*}
            & \cV^{\top}\left(\cA_2(x)\otimes \cA_1(x)\right)\cV\\
    &= r_2\sum_{i=1}^{n+1} \begin{bmatrix}
            v^i \\b_i
        \end{bmatrix}^{\top}\cA_1(x)\begin{bmatrix}
            v^i \\b_i
        \end{bmatrix}+2\sum_{i=1}^n (x_i-c^2_i)\begin{bmatrix}
            v^i \\b_i
        \end{bmatrix}^{\top}\cA_1(x)\begin{bmatrix}
            v^{n+1} \\b_{n+1}
        \end{bmatrix}\\
        &= \frac{1}{r_1r_2}\sum_{i=1}^{n+1}  r_2^2\|r_1v^i+b_i(x-c^1)\|_2^2\\
        &\quad +\frac{2}{r_1r_2}\sum_{i=1}^n r_2(x_i-c^2_i)\ip{r_1 v^i+b_i(x-c^1),r_1v^{n+1}+b_{n+1}(x-c^1)}\\
        &= \frac{1}{r_1r_2}(r_2^2-\|x-c^2\|_2^2) \|r_1v^{n+1}+b_{n+1}(x-c^1)\|_2^2 \\
        &\quad + \frac{1}{r_1r_2}\sum_{i=1}^n \|r_2(r_1v^i+b_i(x-c^1))+(x_i-c^2_i)(r_1v^{n+1}+b_{n+1}(x-c^1))\|_2^2\\
        &\ge  \frac{1}{r_1r_2}(r_2^2-\|x-c^2\|_2^2) \|r_1v^{n+1}+b_{n+1}(x-c^1)\|_2^2,
        \end{align*}

        which proves the desired statement. The proof for the case $\|x-c^1\|<r_1$ and $\|x-c^2\|<r_2$ is very similar, where we in addition need to take care of the difference terms $b_ib_j( r_1^2-\|x-c^1\|_2^2)$. In this case we have
        \begin{align*}
            & \cV^{\top}\left(\cA_2(x)\otimes \cA_1(x)\right)\cV\\
    &= \frac{1}{r_1r_2}(r_2^2-\|x-c^2\|_2^2) \|r_1v^{n+1}+b_{n+1}(x-c^1)\|_2^2 \\
        &\quad + \frac{1}{r_1r_2}\sum_{i=1}^n \|r_2(r_1v^i+b_i(x-c^1))+(x_i-c^2_i)(r_1v^{n+1}+b_{n+1}(x-c^1))\|_2^2\\
        &\quad +\frac{r_1^2-\|x-c^1\|_2^2}{r_1}\left(\sum_{i=1}^{n+1} r_2b_i^2+2\sum_{i=1}^n (x_i-c^2_i) b_ib_{n+1}\right).
        \end{align*}

        We have $r_1^2-\|x-c^1\|_2^2> 0$, and
        \begin{align*}
        &\sum_{i=1}^{n+1} r_2b_i^2+2\sum_{i=1}^n (x_i-c^2_i) b_ib_{n+1}\\
        &=\frac{1}{r_2}\left( \left(r_2^2-\|x-c^2\|_2^2 \right)b_{n+1}^2+\sum_{i=1}^n \left(r_2b_i+(x_i-c^2_i)b_{n+1}\right)^2\right)
        >0. 
        \end{align*}
        Hence, the result follows.

\end{document}